\documentclass[11pt]{article}
\usepackage{graphicx}
\usepackage{tikz,pgfplots}
\usepgfplotslibrary{fillbetween}
\usetikzlibrary{calc} 
\usepackage{amsmath}
\usepackage{amsfonts}
\usepackage{amsthm,comment}
\usepackage{authblk}
\usepackage[T1]{fontenc}
\usepackage{url}
\usepackage{color,ulem,cancel}
\usepackage[margin=1in]{geometry}
\usepackage{amssymb,bm}
\usepackage{amsmath}
\usepackage{amsthm}
\usepackage{graphicx}
\usepackage[active]{srcltx} 
\usepackage{hyperref}
\hypersetup{pdfborder=0 0 0}

\setlength{\oddsidemargin}{0in}
\setlength{\evensidemargin}{-0.0625in}
\setlength{\textwidth}{6.5in}
\setlength{\topmargin}{-.5in}
\setlength{\textheight}{8.8in}


\newtheorem{theorem}{{\sc Theorem}}[section]
\newtheorem{proposition}[theorem]{{\sc Proposition}}

\newtheorem{lemma}[theorem]{{\sc Lemma}}

\newtheorem{remark}[theorem]{Remark}

\newtheorem{definition}[theorem]{Definition}






\def\XXint#1#2#3{{\setbox0=\hbox{$#1{#2#3}{\int}$ }
\vcenter{\hbox{$#2#3$ }}\kern-.6\wd0}}




\newcommand{\Gth}{\theta}

\bmdefine\BGa{\alpha}
\bmdefine\BGb{\beta}
\bmdefine\BGd{\delta}
\bmdefine\BGe{\epsilon}
\bmdefine\BGve{\varepsilon}
\bmdefine\BGf{\phi}
\bmdefine\BGvf{\varphi}
\bmdefine\BGg{\gamma}
\bmdefine\BGc{\chi}
\bmdefine\BGi{\iota}
\bmdefine\BGk{\kappa}
\bmdefine\BGl{\lambda}
\bmdefine\BGn{\eta}
\bmdefine\BGm{\mu}
\bmdefine\BGv{\nu}
\bmdefine\BGp{\pi}
\bmdefine\BGth{\theta}
\bmdefine\BGvth{\vartheta}
\bmdefine\BGr{\rho}
\bmdefine\BGvr{\varrho}
\bmdefine\BGs{\sigma}
\bmdefine\BGvs{\varsigma}
\bmdefine\BGt{\tau}
\bmdefine\BGj{\tau}
\bmdefine\BGu{\upsilon}
\bmdefine\BGo{\omega}
\bmdefine\BGx{\xi}
\bmdefine\BGy{\psi}
\bmdefine\BGz{\zeta}
\bmdefine\BGD{\Delta}
\bmdefine\BGF{\Phi}
\bmdefine\BGG{\Gamma}
\bmdefine\BGL{\Lambda}
\bmdefine\BGP{\Pi}
\bmdefine\BGT{\Theta}
\bmdefine\BGS{\Sigma}
\bmdefine\BGU{\Upsilon}
\bmdefine\BGO{\Omega}
\bmdefine\BGX{\Xi}
\bmdefine\BGY{\Psi}



\bmdefine\BCA{{\mathcal A}}
\bmdefine\BCB{{\mathcal B}}
\bmdefine\BCC{{\mathcal C}}
\bmdefine\BCD{{\mathcal D}}
\bmdefine\BCE{{\mathcal E}}
\bmdefine\BCF{{\mathcal F}}
\bmdefine\BCG{{\mathcal G}}
\bmdefine\BCH{{\mathcal H}}
\bmdefine\BCI{{\mathcal I}}
\bmdefine\BCJ{{\mathcal J}}
\bmdefine\BCK{{\mathcal K}}
\bmdefine\BCL{{\mathcal L}}
\bmdefine\BCM{{\mathcal M}}
\bmdefine\BCN{{\mathcal N}}
\bmdefine\BCO{{\mathcal O}}
\bmdefine\BCP{{\mathcal P}}
\bmdefine\BCQ{{\mathcal Q}}
\bmdefine\BCR{{\mathcal R}}
\bmdefine\BCS{{\mathcal S}}
\bmdefine\BCT{{\mathcal T}}
\bmdefine\BCU{{\mathcal U}}
\bmdefine\BCV{{\mathcal V}}
\bmdefine\BCW{{\mathcal W}}
\bmdefine\BCX{{\mathcal X}}
\bmdefine\BCY{{\mathcal Y}}
\bmdefine\BCZ{{\mathcal Z}}

\bmdefine\Bzr{ 0}
\bmdefine\Ba{ a}
\bmdefine\Bb{ b}
\bmdefine\Bc{ c}
\bmdefine\Bd{ d}
\bmdefine\Be{ e}
\bmdefine\Bf{ f}
\bmdefine\Bg{ g}
\bmdefine\Bh{ h}
\bmdefine\Bi{ i}
\bmdefine\Bj{ j}
\bmdefine\Bk{ k}
\bmdefine\Bl{ l}
\bmdefine\Bm{ m}
\bmdefine\Bn{ n}
\bmdefine\Bo{ o}
\bmdefine\Bp{ p}
\bmdefine\Bq{ q}
\bmdefine\Br{ r}
\bmdefine\Bs{ s}
\bmdefine\Bt{ t}
\bmdefine\Bu{ u}
\bmdefine\Bv{ v}
\bmdefine\Bw{ w}
\bmdefine\Bx{ x}
\bmdefine\By{ y}
\bmdefine\Bz{ z}
\bmdefine\BA{ A}
\bmdefine\BB{ B}
\bmdefine\BC{ C}
\bmdefine\BD{ D}
\bmdefine\BE{ E}
\bmdefine\BF{ F}
\bmdefine\BG{ G}
\bmdefine\BH{ H}
\bmdefine\BI{ I}
\bmdefine\BJ{ J}
\bmdefine\BK{ K}
\bmdefine\BL{ L}
\bmdefine\BM{ M}
\bmdefine\BN{ N}
\bmdefine\BO{ O}
\bmdefine\BP{ P}
\bmdefine\BQ{ Q}
\bmdefine\BR{ R}
\bmdefine\BS{ S}
\bmdefine\BT{ T}
\bmdefine\BU{ U}
\bmdefine\BV{ V}
\bmdefine\BW{ W}
\bmdefine\BX{ X}
\bmdefine\BY{ Y}
\bmdefine\BZ{ Z}



\everymath{\displaystyle}

\usepackage{xcolor}

\newcommand{\jmsf}[1]{ {   #1} }

\def\bbR{\mathbb{R}}

\pgfplotsset{compat=1.18} 
\begin{document}
\title{Fractional Korn's inequalities without boundary conditions}
\author{D. Harutyunyan\thanks{Department of Mathematics, University of California Santa Barbara, harutyunyan@ucsb.edu}, \ 
T. Mengesha\thanks{Department of Mathematics, The University of Tennessee Knoxville, mengesha@utk.edu}, \ 
H. Mikayelyan\thanks{School of Mathematical Sciences, University of Nottingham Ningbo China, Hayk.Mikayelyan@nottingham.edu.cn}, \  
and J.M. Scott\thanks{ Department of Applied Physics and Applied Mathematics, Columbia University, 
jms2555@columbia.edu}}
\date{}
\maketitle
\begin{abstract}
{Motivated by a linear nonlocal model of elasticity, this work establishes fractional analogues of Korn's first and second inequalities for vector fields in fractional Sobolev spaces defined over a bounded domain.  The validity of the inequalities require no additional boundary condition,  extending existing fractional Korn's inequalities that are only applicable for Sobolev vector fields satisfying zero Dirichlet boundary conditions. The domain of definition is required to have a  $C^{1}$-boundary or, more generally,  a Lipschitz boundary with small Lipschitz constant.  We conjecture that the inequalities remain valid for vector fields defined over any Lipschitz domain. We support this claim  
by presenting a proof of the inequalities for vector fields defined over planar convex domains. }
\end{abstract}

\section{Introduction and main results}
In this paper, we prove the fractional analogues of Korn's first and second inequalities in the so-called bounded Lipschitz domains with small Lipschitz constant. For $n\geq 2$, suppose that $\Omega\subset\mathbb{R}^{n}$ is a bounded domain with Lipschitz
 boundary. 
Classical Korn's inequalities give a means of controlling the $L^{p}$-norm of the gradient $\nabla \Bu$ of a Sobolev vector field $\Bu: \Omega\to \mathbb{R}^{n}$ by the symmetric part of its gradient, $e(\Bu)(\Bx) = \frac{1}{2} (\nabla \Bu (\Bx) + \nabla \Bu ^{\intercal}(\Bx))$ (and the vector field itself). For $1<p<\infty$,  a version of the classical Korn's first inequality \cite{Korn1,Korn2} 
states that there is a constant $C=C(\Omega)>0,$ such that 
\begin{equation}
\label{Intro-Korn1st}\inf_{\BA\in \text{Skew}(\mathbb{R}^{n})}\|\nabla \Bu - \BA\|_{L^{p}(\Omega)} \leq C \|e(\Bu)\|_{L^{p}(\Omega)}\quad \text{for all}\quad \Bu\in W^{1,p}(\Omega, \mathbb{R}^{n}).
\end{equation}
Here, $\text{Skew}(\mathbb{R}^n)$ represents the set of $n\times n$ skew symmetric matrices. 
Korn's second inequality reads as follows  \cite{Korn1,Korn2}: there is a constant $C=C(\Omega)>0,$ such that 
\begin{equation}\label{Intro-Korn2nd}
\|\nabla \Bu \|_{L^{p}(\Omega)} \leq C \left(\|e(\Bu)\|_{L^{p}(\Omega)}  + \|\Bu\|_{L^{p}}\right)\quad \text{for all}\quad \Bu\in W^{1,p}(\Omega, \mathbb{R}^{n}).
\end{equation}
These inequalities play a fundamental role in establishing the well posedness of the linear equations of elastostatics, a system of partial differential equations arising from linearized elasticity under various boundary conditions \cite{Korn1,Korn2,Kond-Oleinik}. 

By a fractional analogue of these inequalities we mean estimates of these type for vector fields in the fractional Sobolev spaces $W^{s,p}(\Omega;\mathbb{R}^{n}),$ for $s\in (0,1)$, where $\Bu\in L^{p}(\Omega;\mathbb{R}^{n})$ is in $W^{s,p}(\Omega;\mathbb{R}^{n})$ if and only if 
\[
|\Bu|_{W^{s,p}(\Omega;\mathbb{R}^{n})}^{p} =  \int_{\Omega}\int_{\Omega}\frac{|\Bu(\Bx)-\Bu(\By)|^p}{|\Bx-\By|^{n+ps}}d\Bx d\By <\infty. 
\]
The function space $W^{s,p}(\Omega;\mathbb{R}^{n})$ is a Banach space with the norm $\|\cdot\|_{W^{s,p}(\Omega)} = \|\cdot\|_{L^{p}(\Omega)}  + |\cdot |_{W^{s,p}(\Omega)}$. When writing inequalities analogous to \eqref{Intro-Korn1st} and \eqref{Intro-Korn2nd} for vector fields in $W^{s,p}(\Omega;\mathbb{R}^{n})$, one must find proper substitutes for the notion of gradient and symmetric part of the gradient. It is intuitively clear that the difference quotient $\frac{\Bu(\By) - \Bu(\Bx)}{|\By-\Bx|}$ could be used as a substitute for the gradient of $\nabla \Bu$ while the seminorm $|\Bu|_{W^{s,p}(\Omega)}$  replaces $\|\nabla \Bu\|_{L^{p}(\Omega)}$. Noting that for $\Bx\in \Omega$ and $\By$ close to $\Bx$, one has for suffficiently smooth vector fields $\Bu(\Bx)$ that
{\small$
\frac{(\Bu(\By) - \Bu(\Bx))}{|\By-\Bx|}\cdot \frac{(\By-\Bx)}{|\By - \Bx|} \approx e(\Bu)(\Bx)\frac{(\By-\Bx)}{|\By-\Bx|}\cdot \frac{(\By-\Bx)}{|\By - \Bx|},$}
we will use the projected difference quotient $\frac{(\Bu(\By) - \Bu(\Bx))}{|\By-\Bx|}\cdot \frac{(\By-\Bx)}{|\By - \Bx|}$ as the nonlocal analogue for $e(\Bu),$ and the seminorm
\[
[\Bu]^{p}_{\mathcal{X}^{s,p}(\Omega)} :=\int_{\Omega}\int_{\Omega}\frac{\left|(\Bu(\By) - \Bu(\Bx))\cdot \frac{(\By-\Bx)}{|\By - \Bx|}\right|^p}{|\By-\Bx|^{d+ps}} d\By d\Bx
\] to replace its norm. This weighted norm of the projected difference quotient not only approximates the norm of $e(\Bu)$  but also inherits its zero sets. Indeed, for $L^1_{loc}(\Omega)$ vector fields $\Bu$, the equality $(\Bu(\Bx)-\Bu(\By))\cdot(\Bx-\By)=0$ holds for a.e. $\Bx,\By\in\Omega,$ if and only if $\Bu$ is an infinitesimal rigid motion \cite[Proposition 1.2]{Temam-Miran}, i.e., $\Bu$ has the form 
\[
\Bu(\Bx) = \BA\Bx + \Bb,\quad\text{for some $\BA\in \text{Skew}(\mathbb{R}^{n})$ and $\Bb\in \mathbb{R}^n$}. 
\]
Those are exactly Sobolev vector fields that make $e(\Bu)= \boldsymbol{0}$  $a.e.$ in $\Omega,$ as can be seen from (\ref{Intro-Korn1st}). We denote this class of vector fields by $\mathcal{R}$. Before we are ready to state the desired fractional analogues of Korn's inequalities, we need to introduce the definition of bounded domains with Lipschitz constant not exceeding a number $L>0.$ 
\begin{definition}
 An open bounded Lipschitz domain 
 $\Omega\subset\mathbb R^n$ is said to have a Lipschitz constant $\leq L,$ if the boundary of $\Omega$ can be covered by finitely many balls (or cylinders) $B_i,$ $i=1,2,\dots,m,$ so that each portion 
$\partial\Omega\cap B_i$ is the graph of a Lipschitz function with Lipschitz constant $\leq L,$ upon a rotation of the coordinate system. 
\end{definition}

\begin{remark}
An open bounded domain {   with} $C^1$-boundary has a local Lipschitz constant as small as any initially chosen positive constant $\epsilon>0.$
\end{remark}

\begin{theorem}\label{Intro:mainthm}
    Let $n \geq 2$, $s \in (0,1)$, $p \in (1,\infty)$. There exists a constant $M_0>0,$ depending only on $n,p,$ and $s,$ such that the following holds: for any open bounded Lipschitz set $\Omega \subset \mathbb{R}^n$ with Lipschitz constant $\leq M_0,$ there exist positive constants $C_1$ and $C_2,$ depending only on $d,s,p$ and $\Omega,$
    such that for all $\Bu \in W^{s,p}(\Omega;\mathbb{R}^n)$, one has
\begin{equation}
\label{thm:korn1st}
\inf_{\Br \in \mathcal{R}}|{\Bu}-{\Br}|^p_{W^{s,p}(\Omega)} \leq C_1 [\Bu]^{p}_{\mathcal{X}^{s,p}(\Omega)},
\end{equation}
and
\begin{equation}
|{\Bu}|^p_{W^{s,p}(\Omega)} \leq C_2([{\Bu}]^p_{\mathcal{X}^{s,p}(\Omega)} + \|\Bu\|^p_{L^{p}(\Omega)}),\label{thm:korn2nd} 
\end{equation}
{  where $\mathcal{R}$ is the class of infinitesimal rigid motions.}

\end{theorem}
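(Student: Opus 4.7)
The plan is to prove the theorem by reducing to a model case (half-space) via a localization argument in which the near-flatness of $\partial\Omega$ ensured by the small Lipschitz constant permits a near-identity change of coordinates.

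The starting point is a version of the inequalities \eqref{thm:korn1st}--\eqref{thm:korn2nd} on the whole space $\mathbb{R}^n$ and on the half-space $\mathbb{R}^n_+$. On $\mathbb{R}^n$, one can show via a Fourier-multiplier/singular-integral analysis that the operator sending the full difference quotient to the projected difference quotient is bounded and invertible on the orthogonal complement of the finite-dimensional subspace $\mathcal{R}$, which gives $|\Bu|_{W^{s,p}(\mathbb{R}^n)}^p \leq C\,[\Bu]^p_{\mathcal{X}^{s,p}(\mathbb{R}^n)}$ modulo $\mathcal{R}$. A reflection/extension argument then delivers the half-space analogue (with an extra $L^p$ term of the form appearing in \eqref{thm:korn2nd}).

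The next step is localization. I would cover $\partial\Omega$ by finitely many cylinders $B_i$ in which $\partial\Omega$ agrees with the graph $\{x_n=\varphi_i(\Bx')\}$ of a Lipschitz function $\varphi_i$ with $\text{Lip}(\varphi_i)\leq M_0$, together with an interior neighborhood $B_0 \Subset \Omega$, and fix a partition of unity subordinate to this cover. On each boundary cylinder the flattening map $\Phi_i(\Bx',x_n)=(\Bx',x_n+\varphi_i(\Bx'))$ satisfies $\|\nabla\Phi_i-\BI\|_{L^\infty}\lesssim M_0$, so it is a near-identity bi-Lipschitz transformation. Setting $\Bv=\Bu\circ\Phi_i$, I would compare the seminorms of $\Bv$ on $\Phi_i^{-1}(\Omega\cap B_i)\subset\mathbb{R}^n_+$ with those of $\Bu$ on $\Omega\cap B_i$. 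The Jacobian factors contribute only $1+O(M_0)$ multiplicative errors; the more delicate point is that the unit direction $(\By-\Bx)/|\By-\Bx|$ is tilted by $O(M_0)$ under $\Phi_i$, producing a bound of the form $[\Bv]^p_{\mathcal{X}^{s,p}}\leq (1+CM_0)[\Bu]_{\mathcal{X}^{s,p}(\Omega\cap B_i)}^p + CM_0\,|\Bu|_{W^{s,p}(\Omega\cap B_i)}^p$. Applying the half-space inequality to $\Bv$, mapping back, summing over the cover via the partition of unity, and combining with an interior Korn estimate on $B_0$, yields a global inequality $|\Bu|_{W^{s,p}(\Omega)}^p \leq C[\Bu]_{\mathcal{X}^{s,p}(\Omega)}^p + CM_0\,|\Bu|_{W^{s,p}(\Omega)}^p + C\|\Bu\|_{L^p(\Omega)}^p$. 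Choosing $M_0$ small enough to absorb the middle term produces \eqref{thm:korn2nd}. The first inequality \eqref{thm:korn1st} then follows from \eqref{thm:korn2nd} by a standard Banach--Lions compactness argument, using the compact embedding $W^{s,p}(\Omega)\hookrightarrow L^p(\Omega)$ together with the characterization of $\mathcal{R}$ as the kernel of the projected difference quotient recalled in the introduction.

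The principal obstacle is the comparison estimate in the third step. Unlike the classical symmetric gradient, the projected difference quotient is a \emph{nonlocal} object that couples values of $\Bu$ at arbitrarily distant points through the unit direction joining them, and therefore does not transform covariantly under general Lipschitz changes of coordinates. Controlling this directional distortion uniformly across all scales of the singular integrand $|\Bx-\By|^{-n-ps}$, and showing that the resulting error term in $[\Bv]_{\mathcal{X}^{s,p}}$ vanishes as $M_0\to 0$ in a form that can be absorbed, is the technical heart of the argument. This same difficulty is precisely what prevents a direct extension to arbitrary Lipschitz domains, motivating the authors' conjecture.
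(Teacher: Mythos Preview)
Your overall architecture---prove a model-domain inequality, localize with a partition of unity, then deduce \eqref{thm:korn1st} from \eqref{thm:korn2nd} by a compactness/contradiction argument---matches the paper's. The compactness step you sketch is exactly what the paper does in Section~3.3.

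The routes diverge at the model-domain step. The paper does \emph{not} flatten the boundary to a half-space. Instead it works directly on the epigraph $D=\{x_n>f(\Bx')\}$ and constructs a Nitsche-type extension $E:W^{s,p}(D)\to W^{s,p}(\mathbb{R}^n)$ (Proposition~3.2), proving
\[
[E(\Bu)]_{\mathcal{X}^{s,p}(\mathbb{R}^n)}\le C[\Bu]_{\mathcal{X}^{s,p}(D)}+c\,(1+M)^{2+\frac{n}{2p}}M\,|\Bu|_{W^{s,p}(D)},
\]
after which the whole-space Korn inequality and absorption of the last term for small $M$ give Theorem~3.1 on epigraphs. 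The localization argument then transfers this to bounded $\Omega$ via rigid motions only (so the $\mathcal{X}^{s,p}$ seminorm is exactly preserved on each chart), not via a boundary-flattening change of variables.

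Your flattening route is plausible, and the comparison estimate you write down is essentially correct: under $\Phi(\Bz',z_n)=(\Bz',z_n+f(\Bz'))$ one has $(\Bv(\Bz)-\Bv(\Bw))\cdot(\Bz-\Bw)=(\Bu(\Bx)-\Bu(\By))\cdot(\Bx-\By)-(u_n(\Bx)-u_n(\By))(f(\Bx')-f(\By'))$, which indeed gives an $O(M_0)\,|\Bu|_{W^{s,p}}$ error. But you have misplaced the ``principal obstacle.'' That flattening comparison is the easy computation; the genuine difficulty is hidden in the sentence ``a reflection/extension argument then delivers the half-space analogue.'' Any reflection of $\Bu$ across $\partial\mathbb{R}^n_+$ produces a mixed integral (one argument in $\mathbb{R}^n_+$, the other in $\mathbb{R}^n_-$) whose control reduces to an estimate of the form
\[
\int_{\mathbb{R}^n_+}\frac{|u_n(\Bx',\lambda x_n)-u_n(\Bx',\mu x_n)|^p}{x_n^{ps}}\,d\Bx\;\le\;C\,[\Bu]^p_{\mathcal{X}^{s,p}(\mathbb{R}^n_+)}.
\]
This Hardy-type bound for fields \emph{without} boundary conditions is precisely the new ingredient of the paper (Lemma~2.4, proved via the Korn--Poincar\'e inequality on a Whitney decomposition); prior work had it only for $\Bu\in\mathcal{X}^{s,p}_0$ and under the restriction $ps\neq 1$ (see Remark~2.6). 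So your proposal either presupposes the paper's key lemma or inherits those restrictions, and in either case the step you flag as the heart of the argument is not the one that carries the weight.
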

Some remarks are in order. The same way the classical Korn's inequalities are linked to the linearized elasticity, so are their fractional analogues to some nonlocal models of elasticity. We discuss here one such model, peridynamics,  a continuum  nonlocal theory of mechanics of materials initially proposed by Stewart Silling \cite{Silling2001,Silling2007,Silling2010}. In bond-based linearized peridynamics, a material occupying a domain $\Omega$ is approximated to be a complex mass-spring system where material points interact, at a distance, with each other over a bond joining them. If the material is subject  to a deformation $\Bv(\Bx) = \Bx + \Bu(\Bx)$, then $\frac{(\Bu(\By) - \Bu(\Bx))}{|\By-\Bx|}\cdot \frac{(\By-\Bx)}{|\By - \Bx|}$ represents a (unit less) linearized nonlocal strain at $\Bx$ along the bond $\boldsymbol{\xi} =\By-\Bx$. The total strain energy is postulated to be proportional to 
\[
W_\rho(\Bu) = \int_{\Omega}\int_{\Omega} \rho(\By-\Bx) \left|\frac{(\Bu(\By) - \Bu(\Bx))}{|\By-\Bx|}\cdot \frac{(\By-\Bx)}{|\By - \Bx|}\right|^{2} d\By d \Bx,
\]
where $\rho(\boldsymbol{\xi})$ is locally integrable and serves as a weight for the long-range interactions.  Given an external force $\Bf \in L^{2}(\Omega;\mathbb{R}^n)$, the corresponding configuration can be found as a minimizer of the functional
\begin{equation}\label{pot-func}
\Bu\mapsto W_\rho(\Bu) - \int_{\Omega} \Bf(\Bx) \cdot \Bu(\Bx) d\Bx
\end{equation}
over an appropriate admissible subset of $L^{2}(\Omega;\mathbb{R}^n)$. In fact, existence of minimizers in some subsets of the energy space 
$
\mathcal{S}_{\rho}(\Omega) = \{\Bu\in L^{2}(\Omega;\mathbb{R}^n): W_\rho(\Bu) <\infty\}
$
is demonstrated in 
\cite{Mengesha-Du}. See also \cite{Mengesha-Du-non} for existence of solutions to more generalized models of linearized peridynamics. Except for $n=1$, the question that whether $\mathcal{S}_{\rho}(\Omega)$, which is based on the {\it projected difference-quotient},  
is equal to the space 
\[ \{\Bu\in L^{2}(\Omega;\mathbb{R}^n): \int_{\Omega}\int_{\Omega} \rho(\By-\Bx) \frac{\left|\Bu(\By) - \Bu(\Bx)\right|^2}{|\By-\Bx|^2} d \By d\Bx < \infty\}\]
based on the full difference-quotient remains open. The fractional Korn's inequalities proved in Theorem \ref{Intro:mainthm} address this question and establish equality of sets for a special case when $\rho(\boldsymbol{\xi}) = |\boldsymbol{\xi}|^{-n-2(s-1)}$, for $s\in (0, 1)$. In this case,  $W_\rho(\Bu) = [\Bu]^{2}_{\mathcal{X}^{s,2}(\Omega)}$ and 
minimizing the functional in \eqref{pot-func} over a weakly closed subset of the smaller $W^{s,2}(\Omega;\mathbb{R}^n)$, say,  $W^{s,2}_{\omega}(\Omega;\mathbb{R}^n)= \{\Bu \in W^{s,2}(\Omega;\mathbb{R}^n): \omega\subset\Omega,\,\, \Bu=\boldsymbol{0},\  \text{a.e. in} \ \omega\}$ is possible. To apply Hilbert space methods, inequality \eqref{thm:korn2nd} is now essential, along with a Poincar\'e-Korn inequality, see Lemma \ref{K-P} below, to show the coercivity of the functional. We leave the details to the interested readers, see \cite{Mengesha-Du, Mengesha-Du-non}.

We emphasize that the main contribution of this work is proving inequalities \eqref{thm:korn1st} and \eqref{thm:korn2nd} for vector field in  $W^{s,p}(\Omega,\mathbb{R}^n)$ without any ``boundary conditions.'' While \eqref{thm:korn1st} 
as stated appears to be new to our best knowledge, its special version with $\BA=0,$ and inequality \eqref{thm:korn2nd} have appeared in recent works, albeit in restricted forms. Indeed, the variant of \eqref{thm:korn1st} with $\BA=0$ was first proven in \cite{M-half} for the case when $\Omega$ is the half-space, $p=2,$ and for vector fields $\Bu\in W_0^{s,2}(\Omega)$ which is the closure of $C^{1}_{c}(\Omega,\mathbb{R}^n)$ with respect to the norm $\|\cdot\|_{W^{s,2}(\Omega)}$ (roughly speaking for vector fields satisfying zero Dirichlet boundary conditions on $\partial\Omega$), see \cite{NPV}. The estimate for the half-space was then extended for any values $1<p<\infty$ in \cite{MScott2018Korn}. The estimate was then proven in \cite{MS2022} for the restricted class $W^{s,p}_0(\Omega,\mathbb{R}^n)$ for bounded $C^1$ domains, and the same result appeared in \cite{Rut.2022} significantly shortening the proof presented in \cite{MS2022}. A tighter version of estimates \eqref{thm:korn1st} and \eqref{thm:korn2nd}  have also been proven in \cite{HM2023} for the case when $ps>1$, where for some constant $C>0,$ 
\begin{align}\label{HM:korn1st}
|{\Bu}|_{W^{s,p}(\Omega)} \leq C [{\Bu}]_{\mathcal{X}^{s,p}(\Omega)}\quad \text{for all} \quad \Bu \in W^{s,p}_0(\Omega;\mathbb{R}^n).
\end{align}
Via  a counterexample \cite{HM2023}, inequality \eqref{HM:korn1st} is shown to fail for any open bounded subset $\Omega\subset\mathbb R^n$ in the case $ps<1$. This is in stark contrast to the case when $\Omega=\mathbb{R}^{n}$ or $\Omega=\mathbb{R}^{n}_{+}$ (unbounded domains), where \eqref{HM:korn1st} is proved to hold for any $s\in(0,1)$, $p\in (1, \infty)$ such that $ps\neq 1,$ \cite{M-half, MS2022}.   In fact, in this case, our current work implies that the restriction $ps\neq 1$ is not even necessary.   
We note that \eqref{HM:korn1st} is the fractional analogue of another version of Korn's first inequality: 
\[
\|\nabla \Bu \|_{L^{p}(\Omega)} \leq C_p \|e(\Bu)\|_{L^{p}(\Omega)}\quad\text{for all} \quad \Bu\in W^{1,p}_0(\Omega, \mathbb{R}^{n}),
\]
for a constant $C_p>0$ that depends only on $p.$ This being said, a new phenomenon occurs in the fractional setting. 
\begin{remark}
It is well known that the range of exponent that validates the classical Korn inequalities \eqref{Intro-Korn1st} and \eqref{Intro-Korn2nd} is $1<p<\infty.$  Moreover, for Sobolev vector fields that satisfy zero Dirichlet boundary conditions, one can always choose $\BA=0$ 
in \eqref{Intro-Korn1st} in that range. However, this is no longer true in the fractional setting because despite the fact that the case $ps<1$ is included in the validity range for \eqref{thm:korn1st} and \eqref{thm:korn2nd}, the version of \eqref{thm:korn1st}
with $\BA=0$ fails in bounded domains in the case 
$ps<1.$
\end{remark}

As it is clear from the formulation, Theorem \ref{Intro:mainthm} has the limitation that inequalities \eqref{thm:korn1st} and \eqref{thm:korn2nd} are established for a class of vector fields defined over a domain with a boundary  that has a sufficiently small Lipschitz constant. Taking clues from the classical Korn's inequalities \cite{nitsche1981korn}, we conjecture that in fact the inequalities remain valid for any bounded Lipschitz domain. To support the claim, we establish the same inequalities for planar convex Lipschitz domains with no constraint on the size of Lipschitz constant of the boundary.  This will be demonstrated in Section \ref{sec:convex}.

As we will show in Section \ref{sec:epigraph}, inequality  \eqref{thm:korn1st} follows from \eqref{thm:korn2nd}. The main challenge is thus proving \eqref{thm:korn2nd}. Our method of proof is standard. We first establish \eqref{thm:korn2nd} for epigraphs supported by a Lipschitz function and then use a partition of unity to localize near the boundary of the domain. The later part of the argument is successfully carried out in \cite{MS2022} and \cite{Rut.2022} and we will not repeat it here. We would rather focus on obtaining the estimate for epigraphs. That will be accomplished after proving the existence
of an extension operator to extend the vector fields in the epigraph to be defined on $\mathbb{R}^n$. As in \cite{MS2022} we will use the extension introduced in \cite{nitsche1981korn} which allows us to control the seminorm of the extended vector fields by the seminorm over the epigraph. In this work, we use an improved Hardy-type inequality, Lemma \ref{lem:2.4} in Section \ref{sec:tools}, to overcome a technical difficulty that we encountered in \cite{MS2022} and restricted the validity \eqref{thm:korn2nd} to only vector fields that vanish on the boundary.

\section{Korn-Poincar\'e and Hardy-type inequalities}
\label{sec:tools}
{  Given an open set $D\subset \mathbb{R}^{n},$ we define the spaces $\mathcal{X}^{s,p}_{0}(D)$ and $\mathcal{X}^{s,p}(D)$ to be the closure of $C_{c}^{1}(D;\mathbb{R}^n)$  and $C^{1}_c(\overline{D};\mathbb{R}^n)$, respectively, with respect to the norm $\|\cdot\|_{\mathcal{X}^{s,p}(D)}, $ where $C_{c}^{1}(\bar{D};\mathbb{R}^n)$ is the set of $C^1$ functions whose support is compactly contained in $\overline{D}$.  
 It is known that  for bounded domains with Lipschitz boundary, $C^{1}(\overline{D};\mathbb{R}^n)$ is  dense in $\mathcal{X}^{s,p}(D)$, as shown in \cite[Theorem 3.3]{Mengesha-Scott-LinLoc}.} 
We begin with the following Korn-Poincar\'e inequalities that are compatible with the seminorm $[\cdot]_{\mathcal{X}^{s,p}(\Omega)}$.  
It is worth mentioning that the fractional Korn-Poincar\'e inequality is an important component in the proof of the first and second fractional Korn inequalities. This is in contrast to the classical local setting where the Korn-Poincar\'e inequality is derived as a consequence of Korn's first inequality after the latter has been established by other means.

\begin{lemma}[Korn-Poincar\'e inequalities]\label{K-P}
Suppose that $\Omega$ is a bounded Lipschitz domain. Then for any $s\in (0,1)$, $p\in (1, \infty)$,  there exists a positive constant $C$ depending only on $n,p,s,$ and $\Omega,$ such that 
\begin{align*}
\min_{\Br\in \mathcal{R}}\|{\bf u}-\Br\|_{L^{p}(\Omega)} &\leq C [{\bf u}]_{\mathcal{X}^{s,p}(\Omega)} \quad \text{for all} \quad {\bf u}\in \mathcal{X}^{s,p}(\Omega).
\end{align*}
Moreover, if $V\subset L^{p}(\Omega;\mathbb{R}^{n})$ is a weakly closed subset such that $V\cap \mathcal{R} = \{\boldsymbol{0}\}$, then there exists a constant $\tilde C>0,$ that in addition may depend on $V,$ such that  
\[
\|\Bu\|_{L^{p}(\Omega;\mathbb{R}^{n})} \leq \tilde C [{\bf u}]_{\mathcal{X}^{s,p}(\Omega)} \quad \text{for all}\quad {\bf u} \in V.
\]
 
\end{lemma}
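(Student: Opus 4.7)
The plan is to prove both assertions by a compactness-contradiction argument, with the second assertion reduced to the first.

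For the first inequality, suppose it fails. Then there is a sequence $\Bu_k \in \mathcal{X}^{s,p}(\Omega)$ with $[\Bu_k]_{\mathcal{X}^{s,p}(\Omega)} \to 0$ while $\min_{\Br \in \mathcal{R}} \|\Bu_k - \Br\|_{L^p(\Omega)} = 1$. Let $\Br_k \in \mathcal{R}$ realize the minimum (such a projection onto the finite-dimensional subspace $\mathcal{R}$ exists by standard arguments), and set $\Bv_k := \Bu_k - \Br_k$. Since the integrand of $[\cdot]_{\mathcal{X}^{s,p}(\Omega)}$ involves $(\Br(\By)-\Br(\Bx))\cdot(\By-\Bx) = \BA(\By-\Bx)\cdot(\By-\Bx) = 0$ for any $\Br(\Bx) = \BA\Bx + \Bb \in \mathcal{R}$ (skew-symmetry of $\BA$), one has $[\Bv_k]_{\mathcal{X}^{s,p}(\Omega)} = [\Bu_k]_{\mathcal{X}^{s,p}(\Omega)} \to 0$; moreover, by the minimality of $\Br_k$, $\min_{\Br \in \mathcal{R}} \|\Bv_k - \Br\|_{L^p(\Omega)} = \|\Bv_k\|_{L^p(\Omega)} = 1$. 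In particular $\{\Bv_k\}$ is bounded in $\mathcal{X}^{s,p}(\Omega)$.

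The heart of the argument is then a compactness claim: any sequence bounded in $\mathcal{X}^{s,p}(\Omega)$, with $\Omega$ bounded and Lipschitz, admits a strongly $L^p$-convergent subsequence. Granting this, extract $\Bv_{k_j} \to \Bv$ in $L^p(\Omega)$ (and, along a further subsequence, a.e.). By Fatou's lemma applied to the double integral defining $[\cdot]_{\mathcal{X}^{s,p}(\Omega)}$, one gets $[\Bv]_{\mathcal{X}^{s,p}(\Omega)} \leq \liminf_j [\Bv_{k_j}]_{\mathcal{X}^{s,p}(\Omega)} = 0$, which forces $(\Bv(\By) - \Bv(\Bx)) \cdot (\By - \Bx) = 0$ a.e., and hence $\Bv \in \mathcal{R}$ by \cite[Proposition 1.2]{Temam-Miran}. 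However, passing to the limit in $\|\Bv_{k_j} - \Br\|_{L^p(\Omega)} \geq 1$ for each $\Br \in \mathcal{R}$ yields $\min_{\Br \in \mathcal{R}} \|\Bv - \Br\|_{L^p(\Omega)} \geq 1$, which contradicts $\Bv \in \mathcal{R}$.

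For the second inequality, I again argue by contradiction. If it fails, there is a sequence $\Bu_k \in V$ with $\|\Bu_k\|_{L^p(\Omega)} = 1$ and $[\Bu_k]_{\mathcal{X}^{s,p}(\Omega)} \to 0$. The first part produces $\Br_k \in \mathcal{R}$ with $\|\Bu_k - \Br_k\|_{L^p(\Omega)} \to 0$; hence $\{\Br_k\}$ is bounded in $L^p$, and by the finite-dimensionality of $\mathcal{R}$, a subsequence $\Br_{k_j} \to \Br_\ast \in \mathcal{R}$. Therefore $\Bu_{k_j} \to \Br_\ast$ strongly, and a fortiori weakly, in $L^p(\Omega)$. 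Weak closedness of $V$ gives $\Br_\ast \in V$, and then $V \cap \mathcal{R} = \{\boldsymbol{0}\}$ forces $\Br_\ast = \boldsymbol{0}$, contradicting $\|\Br_\ast\|_{L^p(\Omega)} = \lim_j \|\Bu_{k_j}\|_{L^p(\Omega)} = 1$.

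The main obstacle is the compactness claim. The seminorm $[\cdot]_{\mathcal{X}^{s,p}(\Omega)}$ only controls the longitudinal (projected) component of the fractional difference quotient, so one cannot directly invoke the Rellich-type embedding $W^{s,p}(\Omega) \subset\subset L^p(\Omega)$ - indeed, promoting the projected seminorm to full control over $|\cdot|_{W^{s,p}(\Omega)}$ is precisely the content of Theorem \ref{Intro:mainthm}, whose proof uses this lemma, so that route is circular. I would instead establish the compactness directly through a Fr\'echet-Kolmogorov type criterion, either by averaging the projected seminorm bound over directions to recover an $L^p$ modulus of continuity on interior subdomains and then handling the boundary layer via the density statement from \cite[Theorem 3.3]{Mengesha-Scott-LinLoc}, or by invoking the compact embedding theorems developed for peridynamic function spaces in the Mengesha-Du line of work \cite{Mengesha-Du,Mengesha-Du-non} referenced above.
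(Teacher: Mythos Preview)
Your proposal is correct and follows essentially the same contradiction--compactness scheme as the paper; the paper resolves the compactness step by citing \cite[Theorem~1.3]{DMT-compact} (Du--Mengesha--Tian), which is precisely the ``Mengesha--Du line'' option you flag, and then uses a Cauchy-sequence argument in $\mathcal{X}^{s,p}(\Omega)$ where you use Fatou (both are fine). For the second assertion the paper simply cites \cite[Proposition~2.7]{Mengesha-Du-non}, so your short derivation of it from the first inequality is a welcome addition rather than a deviation.
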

\begin{proof}
We prove the first assertion. The proof of the second can be found in \cite[Proposition 2.7]{Mengesha-Du-non}. 
We will use a standard contradiction argument adopted by Kondratiev and Oleinik for the classical case in \cite{Kond-Oleinik}. Suppose that there is a sequence  $\Bu_k\in \mathcal{X}^{s,p}(\Omega)$ and the corresponding minimizers  $\BA_k\in \mathrm{skew}(\mathbb R^n)$ and ${\bf b}_k\in\mathbb R^n$ such that 
\begin{equation}
\label{seq-bound}
\|\Bu_k-\BA_k\cdot \Bx-{\bf b}_k\|_{L^{p}(\Omega)}=1\quad \text{and} \quad [\Bu_k]_{\mathcal{X}^{s,p}(\Omega)}\leq 1/k,\quad k=1,2,\dots
\end{equation}
Upon passing to the fields $\Bv_k=\Bu_k-\BA_k\cdot \Bx - {\bf b}_k$ we can assume without loss of generality that $\BA_k=0$ and ${\bf b}_k = 0$ in \eqref{seq-bound} for all $k$. Thus we have the minimality conditions 
\begin{equation}
\label{optimality}
\|\Bv_k\|_{L^{p}(\Omega)}\leq \|\Bv_k-\BA\cdot \Bx-{\bf b}\|_{L^{p}(\Omega)}\quad \text{for any}\quad \BA\in \mathrm{skew}(\mathbb R^n), \ \  {\bf b}\in \mathbb R^n, \ \ k=1,2,\dots
\end{equation}
We then have from \eqref{seq-bound} that the sequence ${\bf v}_k$ is bounded in $\mathcal{X}^{s,p}(\Omega)$. 
We can now apply the compactness theorem in \cite[Theorem~1.3]{DMT-compact}  to conclude that   
the sequence $\{\Bv_k\}$ is pre-compact in $L^p(\Omega),$ thus we can assume without loss of generality that 
\begin{equation}
\label{Lp-convergence}
\Bv_k\to \Bv \quad \text{in}\quad L^p(\Omega),
\end{equation}
for some field $\Bv\in L^p(\Omega).$ We have by  (\ref{seq-bound}) that
\begin{align*}
 \|\Bv_k-\Bv_m\|_{\mathcal{X}^{s,p}(\Omega)}&= ([\Bv_k-\Bv_m]_{\mathcal{X}^{s,p}(\Omega)}+\|\Bv_k-\Bv_m\|_{L^p(\Omega)})\\ \nonumber
 & \leq  C([\Bv_k]_{\mathcal{X}^{s,p}(\Omega)}+[\Bv_m]_{\mathcal{X}^{s,p}(\Omega)}+\|\Bv_k-\Bv_m\|_{L^p(\Omega)})\\ \nonumber
 &\leq C(1/k+1/m+\|\Bv_k-\Bv_m\|_{L^p(\Omega)}),
 \end{align*}
thus the condition (\ref{Lp-convergence}) implies that the sequence $\{\Bv_k\}$ is Cauchy and thus is convergent in $\mathcal{X}^{s,p}(\Omega)$ and the limit is ${\bf v}$. This gives $\Bv_k\to \Bv$ in  $\mathcal{X}^{s,p}(\Omega)$ as $k\to \infty$ as well. 
We thus have from (\ref{seq-bound}) that 
\begin{align*}
[\Bv]_{\mathcal{X}^{s,p}(\Omega)}&\leq [\Bv_k]_{\mathcal{X}^{s,p}(\Omega)}+[\Bv-\Bv_k]_{\mathcal{X}^{s,p}(\Omega)}\\
&\leq 1/k+[\Bv-\Bv_k]_{\mathcal{X}^{s,p}(\Omega)}\to0
\end{align*}
as $k\to\infty,$ thus  $[\Bv]_{\mathcal{X}^{s,p}(\Omega)}=0,$ which gives 
\begin{equation}
\label{rigid-bound}
\Bv(\Bx)=\BA\cdot \Bx+\Bb,\quad\text{for a.e.}\quad\Bx\in\Omega,
\end{equation}
for some constant skew-symmetric matrix $\BA\in \mathbb R^{n\times n}$ and some vector $\Bb\in\mathbb R^n$ (see \cite[Proposition 1.2]{Temam-Miran} or \cite[Theorem 3.1]{Korn-characterization}). 
We then have by (\ref{seq-bound}), (\ref{optimality}), and (\ref{rigid-bound}) that  
\begin{align*}
1=\|\Bv_k\|_{L^p(\Omega)}
 \leq \|\Bv_k-\BA\cdot \Bx-{\bf b}\|_{L^p(\Omega)} 
 = \|\Bv_k-\Bv\|_{L^{p}(\Omega)}\to 0, 
\end{align*}
as $k\to\infty,$ which is a contradiction.
\end{proof}
\begin{remark}
\label{Korn-Poincare-remark}
We remark that if, for a given $\tau>0$ and $\Bx_0 \in \mathbb{R}^n$, $Q(\Bx_0, \tau)$ represents a cube centered at $\Bx_0$ with {   side length $2\tau$}, then a simple scaling argument yields the estimate 
\begin{equation}
\label{Korn-Poincare}
\min_{\Br\in \mathcal{R}}\|{\bf u}-\Br\|_{L^{p}(Q(\Bx_0, \tau))} \leq C\, \tau^{s}\,[{\bf u}]_{\mathcal{X}^{s,p}(Q(\Bx_0, \tau))}
\end{equation}
for all ${\bf u}\in \mathcal{X}^{s,p}(Q(\Bx_0, \tau))$, 
where the constant $C$ is the constant which depends only on $n,$ $p$, and $s$ and the unit cube $Q({\boldsymbol 0}, 1)$.
\end{remark}
The following variant of the fractional Hardy-type 
inequality is key for proving the boundedness of the extension operator we will define in the next section. For notational convenience, we represent points $\Bx\in \mathbb{R}^n$ as $\Bx = (\Bx', x_n)\in \mathbb{R}^{n-1}\times \mathbb{R}$.

\begin{definition}[Epigraph]
Let $f\colon \mathbb R^{n-1}\to\mathbb R$ be a continuous function. 
The set
$$D=\{(\Bx',x_n) \ : \ \Bx'\in \bbR^{n-1},\ x_n>f(\Bx')\}$$ 
is called an epigraph supported by the function $f$. 
In that case we also denote 
$$D_{-}=\{(\Bx',x_n) \ : \ \Bx'\in \bbR^{n-1},\ x_n<f(\Bx')\}.$$ 
\end{definition}

In what follows, $f$ will be a globally Lipschitz function with $\|\nabla f\|_{L^{\infty}(\mathbb R^{n-1})}\leq M.$ Also, capital letter $C$ will denote a constant that depends on $n,p,s$ and $M,$ while small letter $c$ will denote a constant that depends only on $n,p$ and $s.$ For any epigraph $D$ and any $\eta>0$, define the mapping $\Phi_\eta: {D_{-}} \to D$ given by
\begin{equation}
\label{Phi}
\Phi_\eta(\Bx)=(\Bx', f(\Bx') + \eta(f(\Bx')-x_n)),
\end{equation}
which is clearly a Lipschitz diffeomorphism with the inverse 
\begin{equation*}
(\Phi_\eta)^{-1}(\Bx) = (\Bx', f(\Bx') + \frac{1}{\eta}(f(\Bx')-x_n)),
\end{equation*}
and $\text{det}(\nabla \Phi_\eta) = -\eta$.
By direct calculation we get 
\begin{equation}
\label{NormPhi}
\|\nabla \Phi_{\eta}\|_{L^{\infty}(D_-)}=
\sqrt{n-1 + \eta^{2} + (1+\eta)^{2}{  \|\nabla f\|^2_{L^{\infty}}}},
\end{equation}
and
\begin{equation}
\label{NormPhiInverse}
\|\nabla (\Phi_{\eta})^{-1}\|_{L^{\infty}(D)}=  \sqrt{n-1 + \frac{1}{\eta^{2}} + (1+\frac{1}{\eta})^{2}{  \|\nabla f\|^2_{L^{\infty}}}}.
\end{equation}

Hence, in space dimensions $n\geq 2,$ the norms $\|\nabla\Phi_{\eta}\|_{L^{\infty}(D_-)}$ and 
$\|\nabla (\Phi_{\eta})^{-1}\|_{L^{\infty}(D)}$
are bounded from below by one (independent of the Lipschitz constant of $f$). Moreover, as proved in 
Lemma~\ref{lem:A.1}, there exists a constant $C>0,$
depending only on $n,\eta,$ and $M$ such that
\begin{equation}\label{flat-vs-curved}
|\Bx-\By| \leq C|(\Phi_\eta)^{-1}(\Bx) - \By|
\quad \text{for all}\quad \Bx, \By\in D.
\end{equation}

\vspace{0.3cm}

\begin{lemma}[Hardy-type inequality]
\label{lem:2.4}
Let $f\colon \mathbb R^{n-1}\to\mathbb R$ be a Lipschitz function with Lipschitz constant $\leq M,$ and let $D\subset \mathbb R^n$ be the epigraph supported by $f.$ There exist a constant $C=C(n,p,s,M)>0,$ and a constant {   $c_2(n)\geq1$}  (coming from the Whitney cover of $D$), such that for every $\lambda,\mu \in [1-\delta,1+\delta]$ with {  $\delta=\frac{1}{2c_2(n)\sqrt{n}(2+M)}$} and every vector field $\Bu\in \mathcal{X}^{s,p}(D)$ one has
{\small \begin{equation}
\label{Hardy-type}
\int_{D}\frac{|u_n(\Bx',f(\Bx')+\lambda(x_n-f(\Bx')))-u_n(\Bx',f(\Bx')+\mu(x_n-f(\Bx')))|^p}{|x_n-f(\Bx')|^{ps}}d\Bx \leq C[\Bu]_{\mathcal{X}^{s,p}(D)}^p.
\end{equation}}
\end{lemma}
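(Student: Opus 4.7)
The plan is to combine a Whitney decomposition of the epigraph $D$ with the cube Korn--Poincar\'e inequality \eqref{Korn-Poincare}, exploiting one crucial algebraic observation: for any infinitesimal rigid motion $\Br(\Bx) = \BA\Bx + \Bb$ with $\BA$ skew, the $n$-th component difference $r_n(\Bx_\lambda) - r_n(\Bx_\mu)$ equals $(\lambda-\mu)(x_n - f(\Bx'))(\BA\Be_n)_n = 0$, because the diagonal entries of a skew matrix vanish. Thus $u_n(\Bx_\lambda) - u_n(\Bx_\mu)$ is insensitive to subtracting a rigid motion from $\Bu$, and Korn--Poincar\'e can be brought to bear on the vertical difference inside the integrand.

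Let $\{Q_k\}$ be a Whitney decomposition of $D$ with side lengths $\ell_k$, satisfying the standard bounds $\mathrm{diam}(Q_k) \leq \mathrm{dist}(Q_k, \partial D) \leq c_2(n)\, \mathrm{diam}(Q_k)$. For $\Bx = (\Bx', x_n) \in Q_k$, the vertical distance $t = x_n - f(\Bx')$ is comparable to $\ell_k$ up to a constant depending only on $n$ and $M$, since $D$ is an epigraph over a Lipschitz graph and so $\mathrm{dist}(\Bx, \partial D) \approx t/(1+M)$. The points $\Bx_\lambda$ and $\Bx_\mu$ lie on the vertical line through $\Bx'$ and differ from $\Bx$ by at most $\delta t$. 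The choice $\delta = 1/(2c_2(n)\sqrt{n}(2+M))$ is calibrated precisely so that the concentric doubling $Q_k^*$ of $Q_k$ contains both $\Bx_\lambda$ and $\Bx_\mu$ whenever $\Bx \in Q_k$ and is still compactly contained in $D$.

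Then Remark \ref{Korn-Poincare-remark} applied on $Q_k^*$ produces a rigid motion $\Br_k \in \mathcal{R}$ with $\|\Bu - \Br_k\|_{L^p(Q_k^*)} \leq C \ell_k^s [\Bu]_{\mathcal{X}^{s,p}(Q_k^*)}$, and the algebraic observation rewrites $u_n(\Bx_\lambda) - u_n(\Bx_\mu) = (u_n - r_{k,n})(\Bx_\lambda) - (u_n - r_{k,n})(\Bx_\mu)$. Using $|a - b|^p \leq 2^{p-1}(|a|^p + |b|^p)$, the bi-Lipschitz changes of variables $\Bx \mapsto \Bx_\lambda,\ \Bx \mapsto \Bx_\mu$ (with Jacobians $\lambda, \mu$ bounded away from $0$), and $t \sim \ell_k$ on $Q_k$, one obtains
\begin{equation*}
\int_{Q_k} \frac{|u_n(\Bx_\lambda) - u_n(\Bx_\mu)|^p}{|x_n - f(\Bx')|^{ps}}\, d\Bx \;\leq\; C \ell_k^{-ps} \|\Bu - \Br_k\|^p_{L^p(Q_k^*)} \;\leq\; C[\Bu]^p_{\mathcal{X}^{s,p}(Q_k^*)}.
\end{equation*}
Summing over $k$ and invoking the bounded overlap of $\{Q_k^*\}$ (a consequence of the Whitney property and depending only on $n$) bounds $\sum_k [\Bu]^p_{\mathcal{X}^{s,p}(Q_k^*)}$ by $C[\Bu]^p_{\mathcal{X}^{s,p}(D)}$, closing the estimate.

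The main technical obstacle is the geometric bookkeeping in the second step: verifying that the explicit $\delta$ in the statement is small enough, relative to both $c_2(n)$ and $M$, that $\Bx_\lambda$ and $\Bx_\mu$ stay inside a controlled enlargement of the Whitney cube containing $\Bx$ and that this enlargement is still inside $D$. This is exactly where both Whitney's constant and the Lipschitz constant of $f$ enter $\delta$, and it is also why the constant in \eqref{Hardy-type} degenerates as $M \to \infty$.
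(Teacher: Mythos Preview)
Your proposal is correct and follows essentially the same route as the paper: Whitney decomposition of $D$, comparison of $|x_n-f(\Bx')|$ with the Whitney side length, the key skew-symmetry observation that $r_{k,n}(\Bx_\lambda)-r_{k,n}(\Bx_\mu)=0$, Korn--Poincar\'e on the doubled cubes, and summation via bounded overlap. Your identification of the geometric bookkeeping (ensuring $\Bx_\lambda,\Bx_\mu$ land in the enlarged cube, which dictates the explicit form of $\delta$) as the main technical point also matches the paper's emphasis.
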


\begin{proof} 
{   Given the epigraph $D$ supported by $f$ as in the assumption of the lemma,  }
we consider the sequence of cubes $\{Q_k\}_{k=1}^\infty$ in $\mathbb{R}^{n}$  with the property that
\begin{itemize}
\item[(i)]{   $D = \cup_{k} Q_k$, and the $Q_k$ are mutually disjoint,  }
\item  [(ii)]the doubled cubes $\hat Q_k=2\cdot Q_k$ satisfy the inclusion $ 
\hat Q_k\subset D$ for all $k\in\mathbb N,$ and that they 
have the finite intersection property
\begin{equation*}
\sum_{k=1}^\infty\chi_{\hat Q_k}(\Bx) \leq c_1(n)\quad\text{for all}\quad \Bx\in D, \quad \text{and }
\end{equation*}
\item[(iii)] there exists a constant $c_2(n)$ such that each of the $c_2(n)-$times enlarged cube $c_2(n)\cdot Q_k$ intersects with the graph of $f.$
\end{itemize}
{   Such a covering of the open set $D$ is called a Whitney cover. Given an open set, it is always possible to construct a Whitney cover for it,  see \cite[Chapter VI, Theorem 1]{Stein} for details.}
In the above, the constants $c_1(n)$ and $c_2(n)$
depend only on the space dimension $n.$  Let now $a_k>0$ be the side length of $Q_k.$ Observe that on one hand condition (i) in particular implies that 
\begin{equation}
\label{2.18}
|x_n-f(\Bx')|\geq \frac{a_k}{2},\quad\text{for all}\quad k\in\mathbb N, \ \Bx\in Q_k. 
\end{equation}
On the other hand for a fixed point $\Bx=(\Bx',x_n)\in Q_k,$ let $\mathrm{dist}(\Bx,\mathrm{Gr}(f))=|\Bx-\By|,$
where $\By=(\By',f(\By'))\in\mathrm{Gr}(f)$ and $\mathrm{Gr}(f)$ is the graph of $f.$ We have that 
$|f(\Bx')-f(\By')|\leq M|\Bx'-\By'|$,
thus we can estimate 
{   
\begin{equation*}
    \begin{split}
        |x_n - f(\Bx')| &= |(\Bx',x_n) - (\Bx',f(\Bx'))| \\
        &\leq |(\Bx',x_n) - (\By',f(\By'))| +  |(\By',f(\By')) - (\Bx',f(\Bx'))| \\
        & =  \mathrm{dist}(\Bx,\mathrm{Gr}(f)) + |(\By',f(\By')) - (\Bx',f(\Bx'))| \\
        &\leq \mathrm{dist}(\Bx,\mathrm{Gr}(f)) + \sqrt{1+M^2}|\By'-\Bx'| \\
        & \leq (1+ \sqrt{1+M^2}) \, \mathrm{dist}(\Bx,\mathrm{Gr}(f))\,.
    \end{split}
\end{equation*}
From condition (iii) we have $\mathrm{dist}(\Bx,\mathrm{Gr}(f)) \leq \frac{c_2(n)+1}{2} \sqrt{n}a_k \leq c_2(n) \sqrt{n} a_k$, hence
\begin{equation}
\label{2.19} 
|x_n-f(\Bx')|\leq c_2(n) \sqrt{n} (2+M) a_k \quad\text{for all}\quad k\in\mathbb N, \ \Bx\in Q_k. 
\end{equation} 
}

{   By the definition of $\mathcal{X}^{s,p}(D),$ it suffices to take $\Bu\in C^{1}_c(\overline{D}, \mathbb{R}^n)$. }
 We can then estimate 
\begin{align*}
\int_{D}&\frac{|u_n(\Bx',f(\Bx')+\lambda(x_n-f(\Bx')))-u_n(\Bx',f(\Bx')+\mu(x_n-f(\Bx')))|^p}{|x_n-f(\Bx')|^{ps}}d\Bx \\ \nonumber
&\leq \sum_{k=1}^\infty
\int_{Q_k}\frac{|u_n(\Bx',f(\Bx')+\lambda(x_n-f(\Bx')))-u_n(\Bx',f(\Bx')+\mu(x_n-f(\Bx')))|^p}{|x_n-f(\Bx')|^{ps}}d\Bx. 
\end{align*}
Setting 
$\Phi_\lambda^\ast(\Bx)=(\Bx',f(\Bx')+\lambda(x_n-f(\Bx')))$ for brevity, we aim to prove the inequality 
\begin{equation}
\label{2.20}
\int_{Q_k}\frac{|u_n(\Phi_\lambda^\ast(\Bx))-u_n(\Phi_{\mu}^\ast(\Bx))|^p}{|x_n-f(\Bx')|^{ps}}d\Bx 
\leq C [\Bu]_{\mathcal{X}^{s,p}(\hat Q_k)}^p,
\end{equation}
for each cube $Q_k.$ For every fixed $k\in\mathbb N$ we have by (\ref{2.18}), that
\begin{equation}
\label{2.21}
\int_{Q_k}\frac{|u_n(\Phi_\lambda^\ast(\Bx))-u_n(\Phi_{\mu}^\ast(\Bx))|^p}{|x_n-f(\Bx')|^{ps}}d\Bx
\leq {   2^{ps}}a_k^{-ps}\int_{Q_k} |u_n(\Phi_\lambda^\ast(\Bx))-u_n(\Phi_{\mu}^\ast(\Bx))|^p d\Bx.
\end{equation}
Next we apply the Korn-Poincar\'e inequality (\ref{Korn-Poincare}) to the cube 
$\hat Q_k$ and the vector field $\Bu.$ Hence, there exists a constant $C>0,$ a skew-symmetric matrix $\BA_k\in\mathbb R^{n\times n},$ and a vector $\Bb_k\in\mathbb R^n$ such that 
\begin{equation}
\label{2.22}
\|\Bu(\cdot)-\BA_k(\cdot)-\Bb_k\|_{L^p(\hat Q_k)}\leq
 C a_k^{s}[\Bu]_{\mathcal{X}^{s,p}(\hat Q_k)}.
\end{equation} 
Observe that $\Phi_\lambda^\ast \colon Q_k\to \Phi_\lambda^\ast(Q_k)$ is a one-to-one diffeomorphism with the inverse $\Phi_{1/\lambda}^\ast$ and has Jacobian equal to $\lambda\leq 1+\delta.$ Also, due to the inequality (\ref{2.19}), we have that, for every 
$\Bx\in Q_k$ and every $\lambda\in [1-\delta,1+\delta],$ 
\begin{align*}
|\Phi_\lambda^\ast(\Bx)-\Bx|=|1-\lambda||x_n-f(\Bx')|
\leq \delta c_2(n)\sqrt{n}(2+M)a_k 
\leq \frac{a_k}{2},
\end{align*}
provided $\delta=\frac{1}{2c_2(n)\sqrt{n}(2+M)}.$ This implies the   
inclusion conditions $\Phi_\lambda^\ast(Q_k),\Phi_\mu^\ast(Q_k)\subset \hat Q_k.$ Consequently, {   noting that by skew-symmetry 
$(\BA_k\cdot\Phi_\lambda^\ast(\Bx)+\Bb_k)_n
 -(\BA_k\cdot\Phi_{\mu}^\ast(\Bx)+\Bb_k)_n=0$, and using the bound in (\ref{2.22}),  we can estimate that} 

\begin{align}
\label{2.23}
 \int_{Q_k} &|u_n(\Phi_\lambda^\ast (\Bx))-u_n(\Phi_{\mu}^\ast(\Bx))|^p d\Bx \\ \nonumber
 &= \int_{Q_k} |u_n(\Phi_\lambda^\ast (\Bx))-(\BA_k\cdot\Phi_\lambda^\ast(\Bx)+\Bb_k)_n
 -(u_n(\Phi_{\mu}^\ast (\Bx))-(\BA_k\cdot\Phi_{\mu}^\ast(\Bx)+\Bb_k)_n)|^p d\Bx \\ \nonumber
 &\leq 2^{p-1}\int_{Q_k} |u_n(\Phi_\lambda^\ast (\Bx))-(\BA_k\cdot\Phi_\lambda^\ast(\Bx)+\Bb_k)_n|^pd\Bx \\ \nonumber 
&+2^{p-1}\int_{Q_k} |u_n(\Phi_{\mu}^\ast (\Bx))-(\BA_k\cdot\Phi_{\mu}^\ast(\Bx)+\Bb_k)_n|^p d\Bx \\ \nonumber 
&\leq C\|\Bu(\By)-\BA_k\cdot\By-\Bb_k\|_{L^p(\hat Q_k)}^p \\ \nonumber
&\leq C a_k^{ps}[\Bu(\By)]_{\mathcal{X}^{s,p}(\hat Q_k)}^p.
\end{align}
Putting together now (\ref{2.21}) and (\ref{2.23}) we discover 
\begin{equation*}
\int_{Q_k}\frac{|u_n(\Phi_\lambda^\ast(\Bx))-
u_n(\Phi_{\mu}^\ast(\Bx))|^p}{|x_n-f(\Bx')|^{ps}}d\Bx
\leq C[\Bu(\By)]_{\mathcal{X}^{s,p}(\hat Q_k)}^p.
\end{equation*}
In order to complete the proof of the lemma, one needs to sum (\ref{2.20}) over $k$ and keep in mind the finite intersection property in (ii). This completes the proof of Lemma~2.4.
\end{proof}
\begin{remark}\label{remark-Hardy}
    In the special case of the half space, where $D= \mathbb{R}^{n}_{+}$ and $f\equiv 0$, inequality \eqref{Hardy-type} reduces to 
    \[
\int_{D}\frac{|u_n(\Bx',\lambda x_n)-u_n(\Bx',\mu x_n)|^p}{|x_n|^{ps}}d\Bx \leq C[\Bu]_{\mathcal{X}^{s,p}(D)}^p.
\]
This inequality was proved in {\cite[Lemma 4.1]{M-half}} for vector fields in $\mathcal{X}^{s,p}_{0}(D)$ {   for particular values of $\lambda$ and $\mu$} under the extra assumption that $ps\neq1.$ It is now clear from Lemma \ref{lem:2.4} that this requirement is not necessary and that the fractional Korn's inequality proved in \cite{M-half} for vector fields in $\mathcal{X}^{s,p}_{0}(D)$ is also valid for all $s\in (0,1)$ and $1<p<\infty$ (even when $ps=1$). A consequence of this is that the Korn inequality proved in \cite{MS2022} for vector fields defined on bounded domains with smooth boundary will also be true for the full ranges of $s$ and $p$.  

{   We note that we refer { to} the inequality \eqref{Hardy-type} {as} a Hardy-type because the inequality captures the optimal decay rate to zero of  a 
map near the boundary, say in the case when $D= \mathbb{R}^{n}_{+}$, $(\Bx', x_n)\mapsto u_n(\Bx', \lambda x_n) - u_n(\Bx', \mu x_n)$, which vanishes on the hyperplane $\partial D = \{x_n=0\}$, in terms of an appropriate seminorm.  See \cite{Dyda,Loss-Sloane} for the standard fractional Hardy-type inequalities. }
\end{remark}

\section{Fractional Korn's inequalities}
\label{sec:epigraph}
\subsection{Korn's second inequality over epigraphs}

This section is devoted to the fractional Korn's second inequality {  for vector fields defined over} epigraphs. We prove the following theorem. 
\begin{theorem}[Korn's second inequality in epigraphs]
\label{thm:Korn.Epigraph}
Given $s\in (0,1)$ and $1 < p<\infty$, there exists a universal constant $M_0>0$ and another constant $C_0>0$ depending only on $n, p,s$ and $M_0$ with the following property: For any epigraph $D$ supported by {   $f:\mathbb{R}^{n-1}\to \mathbb{R}$} with $\|\nabla f\|_{L^{\infty}}\leq M_0$, one has for all {   $\Bu\in \mathcal{X}^{s,p}(D)$} the inequality
{   \begin{equation}
\label{Korn.sec.epi}
|\Bu|_{W^{s,p}(D)} \leq C_0[\Bu]_{\mathcal{X}^{s,p}(D)}.
\end{equation}}
\end{theorem}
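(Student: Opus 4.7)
The plan is to reduce the epigraph inequality to the corresponding full-space fractional Korn inequality via a bounded reflection extension. Concretely, the goal is to construct a Nitsche-type operator $E \colon \mathcal{X}^{s,p}(D) \to \mathcal{X}^{s,p}(\mathbb{R}^n) \cap W^{s,p}(\mathbb{R}^n)$ satisfying $E\Bu|_D = \Bu$ together with the bounds
$$[E\Bu]_{\mathcal{X}^{s,p}(\mathbb{R}^n)} \leq C\, [\Bu]_{\mathcal{X}^{s,p}(D)} \quad \text{and} \quad \|E\Bu\|_{L^p(\mathbb{R}^n)} \leq C\,\|\Bu\|_{L^p(D)}.$$
Given such an $E$, I would chain with the full-space Korn inequality $|\Bv|_{W^{s,p}(\mathbb{R}^n)} \leq C[\Bv]_{\mathcal{X}^{s,p}(\mathbb{R}^n)}$, which by Remark~\ref{remark-Hardy} holds for all $s \in (0,1)$ and $p \in (1,\infty)$ (with no restriction $ps \neq 1$), to obtain
$$|\Bu|_{W^{s,p}(D)} \leq |E\Bu|_{W^{s,p}(\mathbb{R}^n)} \leq C\,[E\Bu]_{\mathcal{X}^{s,p}(\mathbb{R}^n)} \leq C\,[\Bu]_{\mathcal{X}^{s,p}(D)},$$
which is exactly \eqref{Korn.sec.epi}.

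Following \cite{nitsche1981korn,MS2022}, I would define the extension on $\Bx \in D_-$ as a linear combination
$$E\Bu(\Bx) = a\,R_n\Bu(\Phi_\lambda(\Bx)) + b\,R_n\Bu(\Phi_\mu(\Bx)),$$
where $R_n$ reverses the sign of the normal component, the reflection parameters satisfy $\lambda, \mu \in [1-\delta, 1+\delta]$ with $\delta$ from Lemma~\ref{lem:2.4}, and the scalars $a, b$ are tuned to enforce $a + b = 1$ (continuity across $\mathrm{Gr}(f)$) together with an algebraic condition that annihilates an otherwise uncontrollable piece of the projected difference quotient. The smallness $\|\nabla f\|_{L^\infty} \leq M_0$ is imposed precisely so that $\delta$ stays bounded below by a universal constant and so that the Jacobian bounds \eqref{NormPhi}--\eqref{NormPhiInverse} for $\Phi_\eta$ and $\Phi_\eta^{-1}$ are uniformly controlled.

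To estimate $[E\Bu]_{\mathcal{X}^{s,p}(\mathbb{R}^n)}$, I would split the double integral over $\mathbb{R}^n \times \mathbb{R}^n$ into three regions: (i) $D \times D$, (ii) $D_- \times D_-$, and (iii) the mixed region $D \times D_-$ together with its symmetric counterpart. Region (i) is literally $[\Bu]^p_{\mathcal{X}^{s,p}(D)}$. For region (ii), the substitutions $\Bx \mapsto \Phi_\lambda^{-1}(\Bx)$, $\By \mapsto \Phi_\mu^{-1}(\By)$ together with the distance comparison \eqref{flat-vs-curved} and the Jacobian bounds reduce the integral to one over $D \times D$ dominated by $C\,[\Bu]^p_{\mathcal{X}^{s,p}(D)}$.

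The hard part will be region (iii). After the triangle inequality and the change of variables, what remains is a residual term schematically of the form
$$\int_D \frac{|u_n(\Phi_\lambda^\ast(\Bx)) - u_n(\Phi_\mu^\ast(\Bx))|^p}{|x_n - f(\Bx')|^{ps}}\, d\Bx,$$
with $\Phi_\eta^\ast$ as in Lemma~\ref{lem:2.4}. In \cite{MS2022} this term could only be absorbed into the full $W^{s,p}$-seminorm together with an $L^p$-piece of $\Bu$, which is what forced the zero-boundary restriction. The decisive improvement here is that Lemma~\ref{lem:2.4} bounds this quantity directly by $C\,[\Bu]^p_{\mathcal{X}^{s,p}(D)}$, with no $L^p$-term and no boundary vanishing required, thereby closing the estimate. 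The main obstacle is therefore the bookkeeping of region (iii): verifying that the coefficients $a,b$ can be chosen so that, after the triangle inequality and change of variables, the only leftover term is genuinely of the form covered by Lemma~\ref{lem:2.4}, and that every other piece of the projected difference quotient is absorbed by $[\Bu]^p_{\mathcal{X}^{s,p}(D)}$ with a constant depending only on $n,p,s$ and $M_0$.
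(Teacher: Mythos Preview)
Your overall architecture---Nitsche-type reflection, then full-space Korn, with Lemma~\ref{lem:2.4} handling the Hardy-type residual in the mixed region---is exactly the paper's route, and your identification of region~(iii) and the role of Lemma~\ref{lem:2.4} is correct. However, there is a genuine gap in your treatment of region~(ii) and, relatedly, in your account of why smallness of $M_0$ is needed.

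You claim that after changing variables the $D_-\times D_-$ integral is dominated by $C\,[\Bu]^p_{\mathcal{X}^{s,p}(D)}$. It is not. Writing $\Bz=\Phi_\lambda(\Bx)$, $\Bw=\Phi_\lambda(\By)$, the normal part of $\Phi_\lambda^{-1}(\Bz)-\Phi_\lambda^{-1}(\Bw)$ is $-\tfrac{1}{\lambda}(z_n-w_n)+\tfrac{1+\lambda}{\lambda}\bigl(f(\Bz')-f(\Bw')\bigr)$. The first piece recombines with the tangential part into $(\Bu(\Bz)-\Bu(\Bw))\cdot(\Bz-\Bw)$ only if the tangential and normal coefficients are tuned separately (in the paper: $k,\ell$ versus $m,q$, with $\lambda k=-m$, $\mu\ell=-q$), not via a single $R_n$ with two scalars $a,b$. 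Even with the correct Nitsche structure, the second piece $\bigl(u_n(\Bz)-u_n(\Bw)\bigr)\bigl(f(\Bz')-f(\Bw')\bigr)$ survives and is bounded only by $\|\nabla f\|_{L^\infty}\,|u_n|_{W^{s,p}(D)}$, not by the $\mathcal{X}$-seminorm. The extension therefore satisfies
\[
[E\Bu]_{\mathcal{X}^{s,p}(\mathbb{R}^n)} \leq C\,[\Bu]_{\mathcal{X}^{s,p}(D)} + c\,M\,|\Bu|_{W^{s,p}(D)},
\]
which is Proposition~\ref{prop:extension}, and the smallness of $M_0$ is used \emph{after} applying full-space Korn, to absorb $c\,c_0\,M_0\,|\Bu|_{W^{s,p}(D)}$ into the left-hand side. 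Your statement that smallness is imposed ``precisely'' for $\delta$ and the Jacobian bounds misses this absorption step, which is where the restriction on $M_0$ actually bites.
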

As we described in the introduction, to prove the fractional Korn's inequality (\ref{Korn.sec.epi}) for an epigraph $D$, we first prove the existence of an extension operator to extend the vector fields in $\mathcal{X}^{s,p}(D)$ to be in $\mathcal{X}^{s,p}(\mathbb{R}^n)$ in such a way that the seminorm of the extended vector fields is controlled by the seminorm over $D$. As in \cite{MS2022}, we will show that the extension operator that was used in \cite{nitsche1981korn} for the proof of the classical Korn's inequality will also be useful to prove the fractional case. 
\begin{proposition}[Extension operator]
\label{prop:extension}
Let $s\in (0,1)$ and $1\leq p<\infty$ and let $D$ be an epigraph supported by a Lipschitz function {   $f:\mathbb{R}^{n-1}\to \mathbb{R}$} with 
$\|\nabla f\|_{L^{\infty}}=M.$ There exists a bounded extension operator $E: W^{s,p}(D;\mathbb{R}^n)\to W^{s, p}(\mathbb{R}^n;\mathbb{R}^n),$ a constant $C>0,$ depending only on $n, p, s,$ and $M,$ and a constant $c>0$ depending only on $n, p,$ and $s,$ with the property that for all $\Bu\in W^{s,p}(D;\mathbb{R}^n)$ one has
{  \begin{equation}\label{ext-norm-estimate}
[E(\Bu)]_{\mathcal{X}^{s,p}(\mathbb{R}^n)} \leq C[\Bu]_{\mathcal{X}^{s,p}(D)} + c(1+M)^{2+\frac{n}{2p}} M|\Bu|_{W^{s,p}(D)}.
\end{equation} }
\end{proposition}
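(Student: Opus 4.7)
The plan is to define a Nitsche-type reflection extension compatible with the projected difference quotient, and then decompose and estimate $[E\Bu]^p_{\mathcal X^{s,p}(\mathbb R^n)}$ piece by piece. For $\Bx\in D$ set $(E\Bu)(\Bx)=\Bu(\Bx)$, and for $\Bx\in D_-$ set
\[
(E\Bu)(\Bx)\;=\;\alpha_1\,\Bu(\Phi_{\lambda_1}(\Bx))+\alpha_2\,\Bu(\Phi_{\lambda_2}(\Bx)),
\]
with $\Phi_\eta$ the reflection \eqref{Phi}, scalars $\lambda_1,\lambda_2\in[1-\delta,1+\delta]$ close to $1$ (with $\delta$ as in Lemma \ref{lem:2.4}), and $\alpha_1+\alpha_2=1$ so that $E\Bu$ is continuous across $\mathrm{Gr}(f)$. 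The four parameters are pinned down exactly as in \cite{nitsche1981korn,MS2022} so that, in the \emph{projected} difference quotient, the leading boundary contribution of the normal component cancels.

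Splitting $\mathbb R^n = D\cup D_-$ (the graph has Lebesgue measure zero) gives
\[
[E\Bu]^p_{\mathcal X^{s,p}(\mathbb R^n)}=I_{++}+I_{--}+2I_{+-},
\]
integrated over $D\times D$, $D_-\times D_-$ and $D_-\times D$, respectively. The term $I_{++}$ equals $[\Bu]^p_{\mathcal X^{s,p}(D)}$. For $I_{--}$, I would apply the change of variables $\tilde\Bx=\Phi_{\lambda_i}(\Bx)$, $\tilde\By=\Phi_{\lambda_j}(\By)$ summand by summand; the Jacobian is controlled by \eqref{NormPhiInverse}, and \eqref{flat-vs-curved} gives $|\Bx-\By|$ comparable to $|\Phi_{\lambda_i}(\Bx)-\Phi_{\lambda_j}(\By)|$. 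The only real defect is a rotation of the unit projection direction of size $O(M)$, which produces a main part bounded by $C\,[\Bu]^p_{\mathcal X^{s,p}(D)}$ and a residual bounded by $c(1+M)^{2+n/(2p)}M\,|\Bu|^p_{W^{s,p}(D)}$.

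The cross term $I_{+-}$ is where the real work happens. For $\Bx\in D_-$, $\By\in D$, using $\alpha_1+\alpha_2=1$, write
\[
\bigl(E\Bu(\Bx)-\Bu(\By)\bigr)\cdot(\By-\Bx)
=\sum_{i=1}^{2}\alpha_i\bigl(\Bu(\Phi_{\lambda_i}(\Bx))-\Bu(\By)\bigr)\cdot\bigl[(\By-\Phi_{\lambda_i}(\Bx))+(\Phi_{\lambda_i}(\Bx)-\Bx)\bigr].
\]
The piece paired with $\By-\Phi_{\lambda_i}(\Bx)$, via the change of variables $\Bx\mapsto\Phi_{\lambda_i}(\Bx)$ and the bilipschitz comparison \eqref{flat-vs-curved}, contributes at most $C\,[\Bu]^p_{\mathcal X^{s,p}(D)}$. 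The piece paired with $\Phi_{\lambda_i}(\Bx)-\Bx=(1+\lambda_i)(f(\Bx')-x_n)\Be_n$ isolates only the coordinate $u_n$; the Nitsche choice of $(\alpha_i,\lambda_i)$ collapses the summation to a difference of the form $u_n(\Phi_{\lambda_1}(\Bx))-u_n(\Phi_{\lambda_2}(\Bx))$, and after integrating in $\By\in D$ what remains is exactly the quantity controlled by the Hardy-type Lemma \ref{lem:2.4}, giving $C\,[\Bu]^p_{\mathcal X^{s,p}(D)}$. The leftover terms, which arise from rewriting $\By-\Bx$ in a frame adapted to $\mathrm{Gr}(f)$ and therefore carry the factor $\|\nabla f\|_{L^\infty}=M$, are bounded pointwise by the full difference quotient and yield the advertised $c(1+M)^{2+n/(2p)}M\,|\Bu|^p_{W^{s,p}(D)}$.

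The main obstacle is precisely this cross term. The projected seminorm insists on projection along the flat vector $\By-\Bx$, but each reflected summand naturally pairs with $\By-\Phi_{\lambda_i}(\Bx)$; the discrepancy is purely in the $\Be_n$ direction and, absent cancellation, is not integrable near the boundary. The Nitsche combination kills the leading divergence, and the improved Hardy-type inequality of Lemma \ref{lem:2.4} (which, unlike \cite{MS2022}, requires no restriction $ps\neq1$) absorbs the residual into $[\Bu]^p_{\mathcal X^{s,p}(D)}$. What escapes both devices is inevitably proportional to $M$, and this is what forces the restriction to small Lipschitz constant in the final Korn inequality.
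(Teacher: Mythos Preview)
Your overall architecture is right---split into $I_{++}$, $I_{--}$, $I_{+-}$, handle $I_{+-}$ via change of variables plus the Hardy-type Lemma~\ref{lem:2.4}---and this is indeed how the paper proceeds. But the extension you write down is \emph{not} the Nitsche extension, and this matters precisely in the place you treat most casually: the $I_{--}$ estimate.

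The Nitsche operator in \cite{nitsche1981korn,MS2022} (and in the paper) does \emph{not} act by the same scalar on every component. It uses coefficients $(k,\ell)$ on the tangential components $\Bu'$ and different coefficients $(m,q)$ on the normal component $u_n$, constrained by $k+\ell=1=m+q$ and, crucially, $\lambda k=-m$, $\mu\ell=-q$. This last relation is exactly what makes $I_{--}$ work. Under $\Bz=\Phi_\lambda(\Bx)$, $\Bw=\Phi_\lambda(\By)$ one has
\[
\big(\Phi_\lambda^{-1}(\Bz)-\Phi_\lambda^{-1}(\Bw)\big)_n
=-\tfrac{1}{\lambda}(z_n-w_n)+\tfrac{1+\lambda}{\lambda}\big(f(\Bz')-f(\Bw')\big),
\]
so the normal direction is \emph{reversed} up to an $O(M)$ correction. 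With the paper's choice $m=-\lambda k$, the factor $m\cdot(-\tfrac{1}{\lambda})=k$ restores the projected difference $(\Bu(\Bz)-\Bu(\Bw))\cdot(\Bz-\Bw)$, and only the $f(\Bz')-f(\Bw')$ piece---genuinely of size $M$---survives as the residual $cM|\Bu|_{W^{s,p}(D)}$.

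With your scalar extension $(E\Bu)=\alpha_1\Bu\circ\Phi_{\lambda_1}+\alpha_2\Bu\circ\Phi_{\lambda_2}$, the same change of variables in $I_{--}$ produces
\[
\alpha_i\Big[(\Bu'(\Bz)-\Bu'(\Bw))\cdot(\Bz'-\Bw')-\tfrac{1}{\lambda_i}(u_n(\Bz)-u_n(\Bw))(z_n-w_n)\Big]+O(M)\cdot(\cdots),
\]
which differs from $\alpha_i(\Bu(\Bz)-\Bu(\Bw))\cdot(\Bz-\Bw)$ by a term $\alpha_i(1+\tfrac{1}{\lambda_i})(u_n(\Bz)-u_n(\Bw))(z_n-w_n)$. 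This discrepancy is $O(1)$, not $O(M)$: the ``rotation of the unit projection direction'' you invoke is in fact a full sign flip in the normal coordinate. The resulting contribution to $[E\Bu]_{\mathcal{X}^{s,p}}$ is of order $|u_n|_{W^{s,p}(D)}$ with no factor of $M$ in front, which cannot be absorbed in the proof of Theorem~\ref{thm:Korn.Epigraph}.

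Your handling of $I_{+-}$ is closer to correct---the constraint you implicitly use there, $\alpha_1(1+\lambda_1)+\alpha_2(1+\lambda_2)=0$, does collapse the dangerous piece to a $u_n(\Phi_{\lambda_1})-u_n(\Phi_{\lambda_2})$ difference amenable to Lemma~\ref{lem:2.4}. But you cannot have it both ways: a single pair $(\alpha_1,\alpha_2)$ cannot simultaneously satisfy this cross-term cancellation \emph{and} compensate the normal sign flip in $I_{--}$. That is precisely why Nitsche decouples the normal coefficients $(m,q)$ from the tangential ones $(k,\ell)$. Rewrite your extension with that asymmetry and your outline goes through; in fact, in the paper's version $I_{+-}$ produces no $M$-residual at all, and the entire $c(1+M)^{2+n/(2p)}M|\Bu|_{W^{s,p}(D)}$ term comes from $I_{--}$.
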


\begin{proof}
By density {   of $C_c^{1}(\overline{D};\mathbb{R}^n)$ in $W^{s,p}(D;\mathbb{R}^{n})$ (see \cite[Theorem 6.70]{Leoni-2023})}, it suffices to show the inequality for $C^1$ vector fields. Following the approach in \cite{nitsche1981korn}, we define the extension operator $E$ as follows. 
For $\Bu = (\Bu',u_n) \in C_c^1(\overline{D};\mathbb{R}^n)$, and for constants $\lambda$, $\mu$, $k$, $\ell$, $m$, and $q$, set
\begin{equation}\label{ext-epigraph}
[ \mathrm{E}(\Bu)(\Bx) ]_i :=
\begin{cases}
u_i(\Bx)\,, & \qquad \Bx \in D\,, \quad i = 1, 2, \ldots n-1, n\,, \\
k \, u_i^{\lambda}(\Bx) + \ell \, u_i^{\mu}(\Bx)\,, & \qquad \Bx \in D_-\,, \quad i = 1, 2, \ldots n-1\,, \\
m \, u_n^{\lambda}(\Bx) + q \, u_n^{\mu}(\Bx)\,, & \qquad \Bx \in D_-\,,
\end{cases}
\end{equation}
where
\begin{equation}\label{ulambdamu}
\begin{split}
u_j^{\lambda}(\Bx) := u_j \big( \Bx',f(\Bx') + \lambda(f(\Bx')-x_d) \big) = u_j(\Phi_\lambda(\Bx)),\\
u_j^{\mu}(\Bx) := u_j \big( \Bx',f(\Bx') + \mu(f(\Bx')-x_d) \big)\,=u_j(\Phi_\mu(\Bx)).
\end{split}
\end{equation}
We choose constants $\lambda$, $\mu$, $k$, $\ell$, $m$, $q$, such that
\begin{equation}\label{eq:ReflectionConditions}
\begin{split}
\lambda > 0\,, \quad \mu > 0\,, \qquad k+ \ell = 1 = m+q\,, \qquad \lambda k = -m\,, \quad \mu \ell = -q\,.
\end{split}
\end{equation}
For $0\leq \lambda<\mu$ these constants are uniquely 
defined and are given by 
\begin{equation}\label{klmq}
k=\frac{1+\mu}{\mu-\lambda},\quad \ell=-\frac{1+\lambda}{\mu-\lambda},\quad
m=-\frac{\lambda(1+\mu)}{\mu-\lambda},\quad q=\frac{\mu(1+\lambda)}{\mu-\lambda}.
\end{equation}
Let now $\delta=\frac{1}{2c_2(n)\sqrt{n}(1+M)}$ be as in Lemma~2.4, and choose 
\begin{equation}
\label{Def.LambdaMu}
\lambda=1-\delta\quad\text{and}\quad \mu=1+\delta, 
\end{equation}
{   where we note that since  $\delta \leq 1/2,$ $\lambda, \mu\in \left[1/2, 3/2\right]$, and $M>0$. }
Recalling that the boundary $\partial D$ is given by the equation $x_n=f(\Bx')$, it is clear that the operator $E$ takes continuous map defined on $D$ to continuous maps on $\mathbb{R}^n$. Moreover, for {   $\Bu\in C_c^{1}(\overline{D},\mathbb{R}^n)$, $E(\Bu) \in W^{s,p}(\mathbb{R}^{n}, \mathbb{R}^{n})$. This can be shown  following calculations similar to the ones that will be used below estimating $[E(\Bu)]_{\mathcal{X}^{s,p}(\mathbb{R}^{n})}$ to demonstrate the inequality \eqref{ext-norm-estimate}. } We split the domain of integration and write 
\begin{align}\label{Int-split}
[E(\Bu)]^p_{\mathcal{X}^{s,p}(\mathbb{R}^{n})}  = [\Bu]^p_{\mathcal{X}^{s,p} (D)}& + [E(\Bu)]^p_{\mathcal{X}^{s,p}(D_{-})}\nonumber\\
&+2\int_{D_{-}}\int_{D} \frac{\left|(E(\Bu)(\Bx) - E(\Bu)(\By)) \cdot(\Bx-\By)\right|^p}{|\Bx-\By|^{n + (s+1)p}}d\By d \Bx.
\end{align}
We need to estimate the second and the third terms. For $\Bx\in D_{-}$, we write $$E(\Bu)(\Bx)= E_{\lambda}(\Bu)(\Bx)  + E_{\mu}(\Bu)(\Bx) $$ 
where $E_{\lambda}(\Bu)(\Bx) = (k (\Bu')^{\lambda}, m\, u^{\lambda}_{n})$ and $E_{\mu}(\Bu)(\Bx) = (\ell (\Bu')^{\mu}, q\, u^{\mu}_{n})$. We have  $$[E(\Bu)]^p_{\mathcal{X}^{s,p}(D_{-})} \leq 2^{p-1}([E_\lambda(\Bu)]^p_{\mathcal{X}^{s,p}(D_{-})}+[E_\mu(\Bu)]^p_{\mathcal{X}^{s,p}(D_{-})}),$$
and will estimate each of the summands next. To estimate $[E_\lambda(\Bu)]^p_{\mathcal{X}^{s,p}(D_{-})}$, we make the change of coordinates $\Bz=\Phi_\lambda(\Bx)$ and $\Bw=\Phi_\lambda(\By)$ and recall the discussion about the mapping $\Phi$ in (\ref{Phi})--(\ref{NormPhiInverse}) to write the integral as  
{\small \begin{align*}
&\lambda ^2\, [E_\lambda(\Bu)]^p_{\mathcal{X}^{s,p}(D_{-})}\\
& = \int_{D}\int_{D}\frac{|k(\Bu'(\Bz) - \Bu'(\Bw))\cdot (\Bz'-\Bw') + m(u_n(\Bz)-u_n(\Bw))\cdot([(\Phi_\lambda)^{-1}(\Bz)]_n - [(\Phi_\lambda)^{-1}(\Bw)]_n)|^{p}}{|(\Phi_\lambda)^{-1}(\Bz) - (\Phi_\lambda)^{-1}(\Bw)|^{n+ (s+1)p}}d\Bz d\Bw.
\end{align*}}

Notice that 
$$[(\Phi_\lambda)^{-1}(\Bz)]_n - [(\Phi_\lambda)^{-1}(\Bw)]_n = -\frac{1}{\lambda}(z_n - w_n) + \frac{1+\lambda}{\lambda}(f(\Bz')-f(\Bw')),$$
and  
\[
|\Bz - \Bw| \leq \|\nabla \Phi_{\lambda}\|_{L^{\infty}(D_-)} \left| (\Phi_{\lambda})^{-1}(\Bz) - (\Phi_{\lambda})^{-1}(\Bw) \right|. 
\]
It then follows using the relation $\lambda \,k = -m,$ that
\begin{align*}
    [E_\lambda(&\Bu)]^p_{\mathcal{X}^{s,p}(D_{-})} \leq \frac{2^{p-1}k^p }{\lambda^2} \|\nabla \Phi_{\lambda}\|^{n+(s+1)p}_{L^{\infty}(D_-)}\,[{\Bu}]^p_{\mathcal{X}^{s,p}(D)}\\ \nonumber
    &+ \frac{2^{p-1}m^p(1+ \lambda)^p}{\lambda^{2+p}} \|\nabla \Phi_{\lambda}\|^{n+(s+1)p}_{L^{\infty}(D_-)}\int_{D}\int_{D}{\frac{\left| \big( u_n(\Bz) - u_n(\Bw) \big) \cdot \big( f(\Bz')-f(\Bw') \big) \right|^p}{|\Bz-\Bw|^{d+(s+1)p}}} d{\Bw}d{\Bz}\\ \nonumber
    &\leq \frac{2^{p-1}k^p }{\lambda^2} \|\nabla \Phi_{\lambda}\|^{n+(s+1)p}_{L^{\infty}(D_-)}\,\left([{\Bu}]^p_{\mathcal{X}^{s,p}(D)} + (1+ \lambda)^p \|\nabla f\|^{p}_{L^{\infty}}|u_n|^p_{W^{s,p}(D)}\right).
\end{align*}
A similar estimate as above also holds for $[E_\mu(\Bu)]^p_{\mathcal{X}^{s,p}(D_{-})}$ where $\lambda, k$ are replaced by $\mu$ and $\ell$. {   We now combine the two estimates, keeping in mind (\ref{NormPhi}), from which we have $\|\nabla \Phi_{\lambda}\|^{n+(s+1)p}_{L^{\infty}(D_-)} \leq C { (1 + M)^{n + 2p} }$, 
and the explicit formulae (\ref{klmq}) and (\ref{Def.LambdaMu}) which imply $k^p \leq C(1 + M)^{p}$, to obtain the bound, after some calculations, that  }
\begin{equation*}
 [E(\Bu)]^p_{\mathcal{X}^{s,p}(D_{-})} \leq C[{\Bu}]^p_{\mathcal{X}^{s,p}(D)}+c(1+M)^{\frac{n}{2}+2p}
 M^{p}|\Bu|^p_{W^{s,p}(D)}.
\end{equation*}
It remains is to estimate the third term $\int_{D_{-}}\int_{D} \frac{\left|(E(\Bu)(\Bx) - E(\Bu)(\By)) \cdot(\Bx-\By)\right|^p}{|\Bx-\By|^{n + (s+1)p}}d\By d \Bx$ in \eqref{Int-split}. To that end, we denote the integral by $I_{mix}$ and for $\Bx\in D_{-}$ write 
\[E({\Bu})(\Bx) = k\Bu(\Phi_\lambda(\Bx)) + \ell\Bu(\Phi_\mu(\Bx)) + (m-k)(\boldsymbol{0}', u_n(\Phi_\lambda(\Bx))) + (q-\ell)(\boldsymbol{0}', u_n(\Phi_{\mu}(\Bx))), \]
It then follows by algebraic calculations and using the relations { (\ref{eq:ReflectionConditions})
and (\ref{klmq}) }between $k, \ell, m$, and $q$ that for $\Bx\in D_{-}$ and $\By\in D$: 
\begin{equation*}
\begin{split}
&(E(\Bu)(\Bx) - E(\Bu)(\By)) \cdot(\Bx-\By)\\
&=
{
k \big( \Bu(\Phi_{\lambda}(\Bx)) - \Bu(\By) \big) \cdot \big( \Bx - \By \big) + 
\ell \big( \Bu(\Phi_{\mu}(\Bx)) - \Bu(\By) \big) \cdot \big( \Bx - \By \big)}
\\
&\quad+{ 
(m-k)  \big( u_n(\Phi_{\lambda}(\Bx)) - u_n(\Phi_{\mu}(\Bx)) \big) \cdot \big( x_n - y_n \big)
}
\\
&=k \big( \Bu(\Phi_{\lambda}(\Bx)) - \Bu(\By) \big) \cdot \big( \Phi_{\lambda}(\Bx) - \By \big) + \ell \big( \Bu(\Phi_{\mu}(\Bx)) - \Bu(\By) \big) \cdot \big( \Phi_{\mu}(\Bx) - \By \big) \\
&\quad + (k-m)  \big( u_n(\Phi_{\lambda}(\Bx)) - u_n(\Phi_{\mu}(\Bx)) \big) \cdot\big( y_n - f(\Bx') \big)
\\
&{\quad\begin{split}    +
k \big( u_n(\Phi_{\lambda}(\Bx)) - u_n(y) \big)
\big( x_n - f(\Bx')-\lambda (f(\Bx') -x_n ) \big) \\ 
 +\ell \big( u_n(\Phi_{\mu}(\Bx)) - u_n(y) \big)
\big( x_n - f(\Bx')-\mu (f(\Bx') -x_n ) \big) \\
 +(k-m) \big( u_n(\Phi_{\lambda}(\Bx)) - u_n(\Phi_{\mu}(\Bx)) \big) \big(  f(\Bx') -x_n \big).
\end{split}}
\end{split}
\end{equation*}
The latter three terms add up to zero. 
We then have the  estimate that 
\[
\begin{split}
    I_{mix}&\leq C k^{p} \int_{D_{-}}\int_{D} \frac{\left|\big( \Bu(\Phi_{\lambda}(\Bx)) - \Bu(\By) \big) \cdot \big( \Phi_{\lambda}(\Bx) - \By \big)\right|^p}{|\Bx-\By|^{n + (s+1)p}}d\By d \Bx\\
    &+ C \ell^{p} \int_{D_{-}}\int_{D} \frac{\left|\big( \Bu(\Phi_{\mu}(\Bx)) - \Bu(\By) \big) \cdot \big( \Phi_{\mu}(\Bx) - \By \big)\right|^p}{|\Bx-\By|^{n + (s+1)p}}d\By d \Bx\\
    &+C|k-m|^{p} \int_{D_{-}}\int_{D} \frac{\left|\big( y_n - f(\Bx') \big) \cdot \big( u_n(\Phi_{\lambda}(\Bx)) - u_n(\Phi_{\mu}(\Bx)) \big)\right|^p}{|\Bx-\By|^{n + (s+1)p}}d\By d \Bx\\
    &= I^{1}_{mix}+ I_{mix}^2 +I_{mix}^3.  
\end{split}
\]
The first two terms $I_{mix}^1$ and $I_{mix}^2$ can be estimated in similar ways. To demonstrate, making  the change of variables $\Bz= \Phi_\lambda (\Bx)$, we obtain that 
\[
I_{mix}^1 = \frac{Ck^p}{\lambda}\int_{D}\int_{D} \frac{\left|\big( \Bu(\Bz) - \Bu(\By) \big) \cdot \big( \Bz - \By \big)\right|^p}{|\left(\Phi_\lambda\right)^{-1}(\Bz)-\By|^{n + (s+1)p}}d\By d \Bz.
\]
We now use \eqref{flat-vs-curved} to estimate the latter by $C [\Bu]_{\mathcal{X}^{s,p}(D)}$. We finish the proof by estimating $I^{3}_{mix}$. Making the variable change $\Bz= \Phi_{1}(\Bx)$ to work solely in $D,$ we have that 
\begin{align*}
I^{3}_{mix} &\leq C \int_{D}\int_{D}\frac{|y_n- f(\Bz')|^p |u_{n}(\Phi_{\lambda}((\Phi_1)^{-1}(\Bz)))-u_{n}(\Phi_{\mu}((\Phi_1)^{-1}(\Bz)))|^p}{|(\Phi_1)^{-1}(\Bz)-\By|^{n + (s+1)p}} d\By d\Bz\\
&= C \int_{D} J(\Bz)| u_{n}(\Bz', f(\Bz')+\lambda(z_n-f(\Bz')))-u_{n}(\Bz', f(\Bz')+\mu(z_n-f(\Bz')))|^{p} d\Bz, 
\end{align*}
where for each $\Bz\in D$ 
\[
J(\Bz) = \int_{D} \frac{|y_n-f(\Bz')|^p}{(|\Bz'-\By'|^2 + |(y_n-f(\Bz')) + (z_n - f(\Bz'))|^2)^{\frac{n+(s+1)p}{2}}} d\By \leq \frac{C}{|z_n - f(\Bz')|^{sp}}
\]
as shown in {    
Lemma \ref{lem:A.2} in the appendix (or \cite[Lemma~\ref{lem:A.1}]{MS2022}).}  As a consequence, we have that 
\begin{equation}
\label{I_mix^3}
I^{3}_{mix}\leq C  \int_{D} \frac{| u_{n}(\Bz', f(\Bz')+\lambda(z_n-f(\Bz')))-u_{n}(\Bz', f(\Bz')+\mu(z_n-f(\Bz')))|^{p}}{|z_n - f(\Bz')|^{sp}} d\Bz.
\end{equation}
Finally, an application of Lemma~\ref{lem:2.4} together with (\ref{I_mix^3}) completes the proof of the proposition.
\end{proof}

We are now ready to prove Theorem~\ref{thm:Korn.Epigraph}.

\begin{proof}[Proof of Theorem~\ref{thm:Korn.Epigraph}]
Theorem~\ref{thm:Korn.Epigraph} follows by an application of the above proposition and Korn's second inequality for $\mathbb R^n,$ \cite{MScott2018Korn}. 
Indeed, let $\Bu \in C^1_c(\overline{D};\mathbb{R}^n)$. As remarked in the proof, $E(\Bu) \in W^{s,p}(\mathbb{R}^n,\mathbb{R}^n)$. Then by the fractional Korn's second inequality 
proved in \cite[Theorem~1.1]{MScott2018Korn} for vector fields defined on $\mathbb{R}^n$, 
we have, on the one hand, for a constant $c_0=c_0(n,p,s),$ that 
\jmsf{
\begin{align*}
|\Bu|_{W^{s,p}(D)}\leq  |E(\Bu)|_{W^{s,p}(\mathbb{R}^n)}
\leq  c_0 |E(\Bu)|_{\mathcal{X}^{s,p}(\mathbb{R}^n)}.
\end{align*}
}
On the other hand, Proposition \ref{prop:extension} yields for constants $C,c>0$ that
\jmsf{
\begin{equation*}
|E(\Bu)|_{\mathcal{X}^{s,p}(\mathbb{R}^n)}\leq C|\Bu|_{\mathcal{X}^{s,p}(D)} + c(1+\|\nabla f\|_{L^{\infty}})^{2+\frac{n}{2p}}\|\nabla f\|_{L^{\infty}} |\Bu|_{W^{s,p}(D)}).
\end{equation*}
}
Consequently, we obtain 
\begin{equation*}
|\Bu|_{W^{s,p}(D)}\leq  c_0C|\Bu|_{\mathcal{X}^{s,p}(D)} + cc_0(1+\|\nabla f\|_{L^{\infty}})^{2+\frac{n}{2p}}\|\nabla f\|_{L^{\infty}} |\Bu|_{W^{s,p}(D)},
\end{equation*}
which yields (\ref{Korn.sec.epi}) \jmsf{for $\Bu \in C^1_c(\overline{D};\mathbb{R}^n)$ provided }
$M_0$ fulfills $(1+M_0)^{2+\frac{n}{2p}}M_0<\frac{1}{cc_0}.$ 
\jmsf{
To conclude that (\ref{Korn.sec.epi}) holds for general $\Bu \in \mathcal{X}^{s,p}(D)$, we use the definition of $\mathcal{X}^{s,p}(D)$. Take a sequence $\{\Bu_j \} \subset C^1_c(\overline{D};\mathbb{R}^n)$ converging to $\Bu$ in $\mathcal{X}^{s,p}(D)$.
Then a subsequence (not relabeled) converges a.e.\ on $D$ and so by Fatou's lemma
\begin{equation*}
    \begin{split}
        |\Bu|_{W^{s,p}(D)} \leq \liminf_{j \to \infty} |\Bu_j|_{W^{s,p}(D)} \leq C_0 \liminf_{j \to \infty} |\Bu_j|_{\mathcal{X}^{s,p}(D)} = C_0 |\Bu|_{\mathcal{X}^{s,p}(D)}.
    \end{split}
\end{equation*}
}
\end{proof}
{  
\begin{remark}
As a consequence of Theorem \ref{thm:Korn.Epigraph}, the extension $E$ in Proposition \ref{prop:extension} is a continuous operator from $\mathcal{X}^{s,p}(D)$ to $\mathcal{X}^{s,p}(\mathbb{R}^n)$, since \eqref{ext-norm-estimate} and \eqref{Korn.sec.epi} yield
\begin{equation*}
    [E(\Bu)]_{\mathcal{X}^{s,p}(\mathbb{R}^n)} \leq C[\Bu]_{\mathcal{X}^{s,p}(D)} + c(1+M)^{2+\frac{n}{2p}} M|\Bu|_{W^{s,p}(D)} \leq C |\Bu|_{\mathcal{X}^{s,p}(D)}.
\end{equation*}
\end{remark}
}

\subsection{Fractional Korn's second inequality in bounded domains}
In this section we provide a proof of inequality (\ref{thm:korn2nd}) in Theorem~\ref{Intro:mainthm}. As already mentioned, we will adopt a partition of unity argument employed in \cite{MS2022}. For the convenience of the reader we repeat the arguments here. Before we present the proof, we make  the following observations related to estimates involving the product $\psi \Bu$ of $\Bu\in \mathcal{X}^{s,p}(\Omega)$ and $\psi\in W^{1, \infty}(\Omega)$. First, such a product $\psi \Bu$ belongs to $ \mathcal{X}^{s,p}(\Omega)$ with the estimate 
\begin{align*}
    [\psi \Bu]_{\mathcal{X}^{s,p}(\Omega)} \leq c\,\|\psi\|_{W^{1, \infty}} \| \Bu\|_{\mathcal{X}^{s,p}(\Omega)}, 
\end{align*}
where $c$ depends only on $n, p, s$, and $ \text{diam}(\Omega)$. This is precisely \cite[Lemma 3.1]{MS2022}. 
Second, due to \cite[Lemma 3.2]{MS2022}, if $\Omega\subset \tilde{\Omega}$, and there exists $\beta>0$ such that for all $\By\in \tilde{\Omega}\setminus \Omega$
\[
\text{dist}(\By, \text{supp($\psi$)}) \geq \beta >0,
\]
then after extending the product by $\boldsymbol{0}$ on $\tilde{\Omega}\setminus \Omega$, it will belong to $\mathcal{X}^{s,p}(\tilde{\Omega})$ with the similar estimate 
\begin{align}\label{prod-zero-ext}
[\psi \Bu]_{\mathcal{X}^{s,p}(\tilde{\Omega})} 
\leq c_1(\beta)\|\psi\|_{W^{1,\infty}}\|\Bu\|_{\mathcal{X}^{s,p}(\Omega)},
\end{align}
where $c_1(\beta)$ depends only on $n, p, s, \text{diam}(\Omega)$, and  $\beta.$ Both statements can be proven by a direct evaluation of the $\mathcal{X}$ seminorm of the product $\psi \Bu,$ see 
\cite{MS2022} for details. We are now ready to present the proof of the the theorem. 

\begin{proof}[Proof of inequality \eqref{thm:korn2nd} of Theorem~  \ref{Intro:mainthm}] 
Let $M_0$ be the constant found in Theorem \ref{thm:Korn.Epigraph}. Suppose that $\Omega$ is a bounded Lipschitz domain with local Lipschitz constant $\leq M_0$. By definition, we may choose an open set $\Omega_0\Subset \Omega$ and open balls $B_{r_j}(\By_j)$, for $j=1, \dots, N$ with centers $\By_j\in \partial \Omega$  such that 
\begin{enumerate}
\item $\Omega=\cup_{j=0}^{N}\Omega_j$ where $\Omega_j = \Omega\cap B_{r_j}(\By_j)$  for $j=1, \dots, N$.

\item For every $1\leq j\leq N$, define $T_j: B_{r_{j}}(\By_j) \to \mathbb{R}^{n}$ to be the operator consisting of the translation $\By_j \to 0$ and a rotation such that $T_j(\partial \Omega\cap B_{r_j}(\By_j) )$ coincides with part of the graph of a Lipschitz function $f_j:\mathbb{R}^{n-1} \to \mathbb{R}$ with $\|\nabla f_j\|_{L^{\infty}(\mathbb R^{n-1})} \leq M_0$. Note that the function $f_j$ is initially only defined on {   an open bounded subset of} $\mathbb R^{n-1},$ but we extend it into all of $\mathbb R^{n-1}$ by Kirszbraun's theorem \cite{Kirzbraun1934}, preserving the Lipschitz constant. This is necessary for the reduction of the situation to epigraphs in $\mathbb R^{n-1}.$
\end{enumerate}
Set $Q_j = T_j (B_{r_j}(\By_j)),$ and also define
\[\begin{split}
Q_j^+ := \{ \Bx \in Q_j \, : \, x_n > f_j(\Bx') \}\,, &\qquad
Q_j^- := \{ \Bx \in Q_j \, : \, x_n < f_j(\Bx') \}\,, \\
K_j^+ := \{ \Bx \in \mathbb{R}^n \, : \, x_n > f_j(\Bx') \}\,, &\qquad
K_j^- := \{ \Bx \in \mathbb{R}^n \, : \, x_n < f_j(\Bx') \}\,.
\end{split}\]
We may choose the map $T_j$ so that  $T_j(\Omega_j) = Q_{j}^{+}$. Note that $T_j$ is a bi-Lipschitz map with Lipschitz constant depending only on $n$ and $\Omega$.  Let $\{ \varphi_j \}_{j=0}^N \subset C^{\infty}_c(\mathbb{R}^n;\mathbb{R})$ be a $C^{\infty}$ partition of unity subordinate to the collection $\{\Omega_0\}\cup \{ B_{r_j}(\By_j) \}_{j=1}^N$. Then for every $1\leq j\leq N,$ we have $\text{supp}(\varphi_j) \subset B_{r_j}(\By_j),$  $\text{dist}( \By, \text{supp}(\varphi_j)) \geq \beta_j > 0$ for every $\By \in \Omega \setminus \overline{\Omega_j} $. We also have $\text{supp} (\varphi_0)\subset \Omega_0$, $\text{dist}( \By, \text{supp}(\varphi_0)) \geq \beta_0 > 0$ for every 
$\By \in \Omega \setminus \overline{\Omega_0},$ and $\textstyle \sum_{j=0}^N \varphi_j \equiv 1$ on $\Omega$. 

Suppose now $\Bu\in C^{1}(\overline{\Omega};\mathbb{R}^n)$. Define $\Bu_j := \varphi_j \Bu$, for $j=0, 1, \dots, N.$ We consider $\Bu_0$ first. After extending it by $\boldsymbol{0}$ to $\mathbb{R}^n$, we have that $\Bu_0\in \mathcal{X}^{s,p}(\mathbb{R}^n)$ and that by the fractional Korn's inequality on $\mathbb{R}^n$,  \cite{MScott2018Korn} and \eqref{prod-zero-ext}
\begin{align}
\label{est-for-u0}
|\Bu_0|_{W^{s,p}(\Omega)} &\leq c[\Bu_0]_{\mathcal{X}^{s,p}(\mathbb{R}^n)}\\ \nonumber
&\leq cc^0_{1}(\beta_0)\|\varphi_0\|_{W^{1,\infty}} \|\Bu\|_{\mathcal{X}^{s,p}(\Omega)}.
\end{align}
For $j=1,\dots,N$ applying again \eqref{prod-zero-ext} using the semi norm $|\cdot|_{W^{s,p}(\Omega)}$ instead of $[\cdot]_{\mathcal{X}^{s,p}(\Omega)}$ we have  
\begin{equation}\label{est-for-ujs}
|\Bu_j|_{W^{s,p}(\Omega)} \leq  c_1^j(\beta_j)\|\varphi_j\|_{W^{1,\infty}}
\|\Bu_j\|_{W^{s,p}(\Omega_j)}
\end{equation}
where $c_1^j(\beta_j)$ depends only on $s,p,n,$ $\text{diam} (\Omega)$ and $\beta_j.$ Now since $T_j$ consists of a rotation and a translation, $\nabla T_j$ is a constant rotation, with $T_j(\Bx) - T_j(\By) = (\nabla T_j) (\Bx-\By)$. Therefore, writing $R_j := \nabla T_j$, define $\Bv_j(\Bx) := R_j \Bu_j(T_j^{-1}(\Bx))$. Then we have $\Bv_j \in W^{s,p}(Q_j^+)$ and that for each $\By\in K^{+}_j\setminus Q_j^{+},$ $\text{dist}(\By, Supp(\Bv_j))\geq \tilde{\beta}_j>0$ for some positive constant $\tilde \beta_j$. Moreover, 
\begin{equation*}
\|\Bv_j\|_{L^{p}(Q_j^{+})} = \|\Bu_j\|_{L^{p}(\Omega_j)},\,\|\Bv_j\|_{W^{s,p}(Q_j^{+})} = \|\Bu_j\|_{W^{s,p}(\Omega_j)},\,\text{and } [\Bv_j]_{\mathcal{X}^{s,p}(Q_j^+)}=[\Bu_j]_{\mathcal{X}^{s,p}(\Omega_j)}.
\end{equation*}
We will demonstrate the last equality as the others can be established similarly. By a change of coordinates,
\begin{equation*}
\begin{split}
[\Bv_j]_{\mathcal{X}^{s,p}(Q_j^+)}^p &= \int_{\Omega_j}\int_{\Omega_j}{ \frac{\left| \big( R_j \Bu_j(\Bx) - R_j \Bu_j(\By) \big) \cdot \big( T_j(\Bx) - T_j(\By) \big)  \right|^p}{|T_j(\Bx) - T_j(\By)|^{n+sp+p}} }d{\By}d{\Bx} \\
	&=  \int_{\Omega_j}\int_{\Omega_j}{ \frac{\left| \big( R_j \Bu_j(\Bx) - R_j \Bu_j(\By) \big) \cdot \big( R_j \Bx - R_j \By \big)  \right|^p}{|R_j \Bx - R_j \By|^{n+sp+p}} }d{\By}d{\Bx} \\
	&=\int_{\Omega_j}\int_{\Omega_j}{ \frac{\left|R_j^{\intercal} R_j \big( \Bu_j(\Bx)-\Bu_j(\By) \big) \cdot \big( \Bx-\By \big)  \right|^p}{|\Bx - \By|^{n+sp+p}} }d{\By}d{\Bx}\\
	&= [\Bu_j]_{\mathcal{X}^{s,p}(\Omega_j)}^p.
\end{split}
\end{equation*}
Extending $\Bv_j$ by $\boldsymbol{0}$ on $K^{+}_j\setminus Q_j^{+}$, we have that $\Bv_j\in C^{1}(K_j^{+};\mathbb{R}^n)$. Applying the fractional Korn's inequality for epigraphs, Theorem \ref{thm:Korn.Epigraph}, we have 
\begin{align} 
\label{Q-to-K}
|\Bv_j|_{W^{s,p}(Q_j^{+})} &\leq |\Bv_j|_{W^{s,p}(K_j^{+})}\\ \nonumber
&\leq C\|\Bv_j\|_{\mathcal{X}^{s,p}(K^{+}_j)},
\end{align}
where $C$ only depends on $s, p, n$ and $M_0$. We may also apply \eqref{prod-zero-ext} to estimate further as 
\begin{equation}
\label{est-rotated-u} 
\|\Bv_j\|_{\mathcal{X}^{s,p}(K^{+}_j)} \leq c_2^j(\tilde{\beta_j})
\|\Bv_j\|_{\mathcal{X}^{s,p}(Q^{+}_j)} 
\end{equation}
We combine now \eqref{est-for-ujs}, \eqref{Q-to-K}, and \eqref{est-rotated-u} to obtain 
\begin{equation}\label{est-for-uj-by-uj}
|\Bu_j|_{W^{s,p}(\Omega)} \leq 
C \|\Bu\|_{\mathcal{X}^{s,p}(\Omega)}, \qquad j=1, 2, \dots, N,
\end{equation}
where $C$ is a positive constant that depends on $s,p,n, \text{diam}(\Omega)$, $M_0$, and the partition of unity. Therefore by \eqref{est-for-u0} and \eqref{est-for-uj-by-uj}, we have 
\[
[\Bu]_{W^{s,p}(\Omega)} = \Big[\sum_{j=0}^{N} \Bu_j\Big]_{W^{s,p}(\Omega)} \leq \sum_{j=0}^{n}[\Bu_j]_{W^{s,p}(\Omega)} \leq C\|\Bu\|_{\mathcal{X}^{s,p}(\Omega)}.
\]
The estimate for vector fields  $\Bu$ in $W^{s,p}(\Omega;\mathbb{R}^n)$ follows by density. This completes the proof. 
\end{proof}

\subsection{Fractional Korn's first inequality in bounded domains}
In this section we provide a proof of inequality (\ref{thm:korn1st}) in Theorem~\ref{Intro:mainthm}.   
\begin{proof}[Proof of inequality \eqref{thm:korn1st} of Theorem~\ref{Intro:mainthm}]
Assume in contradiction (\ref{thm:korn1st}) fails to hold. It then follows that there exist a sequence $\Bu_k\in W^{s,p}(\Omega,\mathbb R^n)$ and a sequence of skew-symmetric matrices $\BA_k\in \mathrm{skew}(\mathbb R^n)$ such that 
\begin{equation}
\label{3.38}
|\Bu_k-\BA_k\cdot \Bx|_{W^{s,p}(\Omega)}{  = \min_{\BA\in \mathrm{skew}(\mathbb R^n)}|\Bu_k-\BA\cdot \Bx|_{W^{s,p}(\Omega)}}=1\,\, \text{ and } \,\, [\Bu_k]_{\mathcal{X}^{s,p}(\Omega)}\leq \frac{1}{k},\quad k=1,2,\dots
\end{equation}
We may also assume that for each $k$ the average of ${\Bu}_k-\BA_k\cdot \Bx$ over $\Omega$ is $\boldsymbol{0}$ by shifting it by a vector 
$\Bb_k\in \mathbb R^n$ if necessary. Upon passing to the fields $\Bv_k=\Bu_k-\BA_k\cdot \Bx-\Bb_k,$ we can further assume without loss of generality that $\BA_k=0$ and $\Bb_k=0$.  Thus  the minimality conditions 
\begin{equation}
\label{3.39}
|\Bv_k|_{W^{s,p}(\Omega)}\leq |\Bv_k-\BA\cdot \Bx|_{W^{s,p}(\Omega)}\quad \text{for any}\quad \BA\in \mathrm{skew}(\mathbb R^n), \ k=1,2,\dots
\end{equation}
hold and by Poincar\'e's inequality, the sequence $\Bv_k$ is bounded in $W^{s,p}(\Omega;\mathbb{R}^n)$. From the compactness theorem,  
\cite[Theorem~7.1]{NPV},  
the sequence $\{\Bv_k\}$ is pre-compact in $L^p(\Omega),$ thus we can assume without loss of generality that 
\begin{equation}
\label{3.40}
\Bv_k\to \Bv \quad \text{in}\quad L^p(\Omega),
\end{equation}
for some field $\Bv\in L^p(\Omega).$ We then have by Korn's second inequality (\ref{thm:korn2nd}) and \eqref{3.39} that
\begin{align*}
 \|\Bv_k-\Bv_m\|_{W^{s,p}(\Omega)}&\leq C([\Bv_k-\Bv_m]_{\mathcal{X}^{s,p}(\Omega)}+\|\Bv_k-\Bv_m\|_{L^p(\Omega)})\\ \nonumber
 & \leq  C([\Bv_k]_{\mathcal{X}^{s,p}(\Omega)}+[\Bv_m]_{\mathcal{X}^{s,p}(\Omega)}+\|\Bv_k-\Bv_m\|_{L^p(\Omega)})\\ \nonumber
 &\leq C(1/k+1/m+\|\Bv_k-\Bv_m\|_{L^p(\Omega)}),
 \end{align*}
thus the condition \eqref{3.40} implies that the sequence $\{\Bv_k\}$ is Cauchy and thus is convergent in $W^{s,p}(\Omega).$ This gives, {   as $k\to \infty$}
\begin{equation}
\label{3.42}
\Bv_k\to \Bv \quad \text{in}\quad W^{s,p}(\Omega).
\end{equation}
From \eqref{3.38} and \eqref{3.42} we have
\begin{align*}
[\Bv]_{\mathcal{X}^{s,p}(\Omega)}&\leq [\Bv_k]_{\mathcal{X}^{s,p}(\Omega)}+[\Bv-\Bv_k]_{\mathcal{X}^{s,p}(\Omega)}\\
&\leq 1/k+[\Bv-\Bv_k]_{W^{s,p}(\Omega)}\to0
\end{align*}
as $k\to\infty,$ thus  $[\Bv]_{\mathcal{X}^{s,p}(\Omega)}=0,$ which gives 
\begin{equation}
\label{3.43}
\Bv(\Bx)=\BA\cdot \Bx+\Bb,\quad\text{for a.e.}\quad\Bx\in\Omega,
\end{equation}
for some constant skew-symmetric matrix $\BA\in \mathbb R^{n\times n}$ and some vector $\Bb\in\mathbb R^n$ \cite[Proposition 1.2]{Temam-Miran}. 
Note that then we have by \eqref{3.38}, \eqref{3.39},  \eqref{3.42}, and \eqref{3.43}:
\begin{align*}
1&=|\Bv_k|_{W^{s,p}(\Omega)}
 \leq |\Bv_k-\BA\cdot \Bx|_{W^{s,p}(\Omega)}
 = |\Bv_k-\Bv|_{W^{s,p}(\Omega)}\to 0 
\end{align*}
as $k\to\infty,$ which is a contradiction. 
\end{proof}

\section{Fractional Korn's inequality for planar polygonal convex domains}\label{sec:convex}
As we discussed in the introduction, we conjecture that the smallness of the Lipschitz constant of the boundary of the domain is not necessary for the validity of the Fractional Korn's inequalities. In this section, we will support this hypothesis by demonstrating the validity of the inequality 
in the case of planar polygonal convex domains. 
The argument of the proof mimics the strategy we used for smooth domains. We begin by proving the inequality for angular domains.  We then cover the boundary of the convex polygonal domain by balls centered on the boundary. The resulting intersecting sets are either wedges (bounded angular domains) or half balls over which  we will have the appropriate estimates. Finally, we use a partition of unity argument to obtain the estimates over the convex polygon. 
In this section, vectors defined on the planar domains are represented as $\Bu = (u_1, u_2)$.

\setcounter{equation}{0}
\subsection{The case of angular domains}
Consider an angular planar domain $D$ with an angle of span in the interval $(0,\pi)$. Upon an affine change of variables, we may  assume without loss of generality that 
$D$ is given by 
\begin{equation}
\label{4.1}
D=\{\Bx\in\mathbb R^2 \ : \ 0<x_1,\ \ \alpha x_1<x_2\},
\end{equation}
for some $\alpha\in\mathbb{R.}$
Note that $D$ is exactly half of the epigraph supported by the function $f(x_1) = \alpha x_1$ defined over $(0,\infty)$.   
In that case, we set 
\begin{equation*}
D_{-}=\{\Bx\in\mathbb R^2 \ : \ 0<x_1,\ \ x_2<\alpha x_1\}.
\end{equation*}
Notice that $\overline{D\cup D_{-}} = \mathbb{R}^{2}_{x_{1} \geq 0 } = \{(x_1, x_2)\in \mathbb{R}^{2}: x_1 \geq 0\}.$ We begin by demonstrating the existence of an extension operator to prove that vector fields defined over $D$ can be extended to $\mathbb{R}^{2}_{x_1 > 0}$ accompanied with an appropriate control of their nonlocal norm.  We use the extension operator defined in \cite{nitsche1981korn} for planar angular domains where it is shown to map $W^{1,2}(D;\mathbb{R}^2)$ to $W^{1,2}(\mathbb{R}^{2}, \mathbb{R}^{2})$. 
\begin{proposition}\label{prop:extesionforangle}
Let $s\in (0, 1)$, $1<p<\infty$, and let $D$ be given by (\ref{4.1}). Then, there exists a bounded extension operator $E: {  \mathcal{X}^{s,p}(D)\to \mathcal{X}^{s, p}( \mathbb{R}^2_{x_1 > 0})}$ such that  $E(\Bu)(\Bx)= \Bu(\Bx)$ for $\Bx\in D$. Moreover, there exists a constant $C>0$ depending only on $p, s,$ and $\alpha,$ such that   for all $\Bu\in \mathcal{X}^{s,p}(D),$
\begin{equation}
\label{4.3}
[E(\Bu)]_{\mathcal{X}^{s,p}(\mathbb{R}^{2}_{x_1 > 0})}\leq C ([\Bu]_{\mathcal{X}^{s,p}(D)}+\|\Bu\|_{L^p(D)}).
  \end{equation}
   
\end{proposition}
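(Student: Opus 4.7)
The plan is to mimic the proof of Proposition \ref{prop:extension}, treating the angular domain $D$ as a (half of an) epigraph supported by the linear function $f(x_1)=\alpha x_1$, with the target $\mathbb{R}^n$ of the extension there replaced here by the half-plane $\mathbb{R}^2_{x_1>0}$. Specifically, setting $\Phi_\lambda(\Bx)=(x_1,\alpha x_1+\lambda(\alpha x_1-x_2))$, which is a bi-Lipschitz map from $D_{-}$ onto $D$, define $E(\Bu)=\Bu$ on $D$ and on $D_{-}$ use the same weighted combination of $\Bu\circ\Phi_\lambda$ and $\Bu\circ\Phi_\mu$ with normal-component corrections as in \eqref{ext-epigraph}, with constants $k,\ell,m,q$ and $\lambda,\mu$ given by \eqref{klmq}--\eqref{Def.LambdaMu} using $M=|\alpha|$.

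Since $E(\Bu)$ vanishes across $\{x_1=0\}$ nowhere---indeed, the only boundary that needs crossing is $\{x_2=\alpha x_1\}$, which is the graph of $f$---the splitting
\[
[E(\Bu)]^p_{\mathcal{X}^{s,p}(\mathbb{R}^2_{x_1>0})}=[\Bu]^p_{\mathcal{X}^{s,p}(D)}+[E(\Bu)]^p_{\mathcal{X}^{s,p}(D_{-})}+2\,I_{\mathrm{mix}}
\]
is identical to \eqref{Int-split}. The estimates for the second and the third terms carry over essentially verbatim from the proof of Proposition \ref{prop:extension}: a change of variables $\Bz=\Phi_\lambda(\Bx),\ \Bw=\Phi_\lambda(\By)$ converts the $D_{-}\times D_{-}$ integral into an integral over $D\times D$; the identity $\lambda k=-m$ and the algebraic cancellation of three of the six terms in the mixed integrand reduce $I_{\mathrm{mix}}$ to an expression estimable by $[\Bu]^p_{\mathcal{X}^{s,p}(D)}$ plus a Hardy-type contribution controlled via Lemma \ref{lem:2.4} and Lemma \ref{lem:A.2}. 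Putting all of this together yields
\[
[E(\Bu)]^p_{\mathcal{X}^{s,p}(\mathbb{R}^2_{x_1>0})}\le C_\alpha\bigl([\Bu]^p_{\mathcal{X}^{s,p}(D)}+|u_2|^p_{W^{s,p}(D)}\bigr).
\]

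The main obstacle, and the only point where the angular case requires genuinely new input, is removing the $|u_2|_{W^{s,p}(D)}$ term on the right in favor of $\|\Bu\|_{L^p(D)}$; we cannot invoke Theorem \ref{thm:Korn.Epigraph} to absorb it into $[\Bu]_{\mathcal{X}^{s,p}(D)}$ because the Lipschitz constant $|\alpha|$ need not be small. To handle this, I would use a Whitney decomposition $D=\bigcup_k Q_k$ by dyadic squares whose side length $a_k$ is comparable to $\mathrm{dist}(Q_k,\partial D)$, and write
\[
|u_2|^p_{W^{s,p}(D)}=\sum_{k}\int_{Q_k}\!\!\int_{Q_k}\!\frac{|u_2(\Bx)-u_2(\By)|^p}{|\Bx-\By|^{n+sp}}\,d\By\,d\Bx+2\sum_{j\ne k}\!\int_{Q_j}\!\!\int_{Q_k}\!\frac{|u_2(\Bx)-u_2(\By)|^p}{|\Bx-\By|^{n+sp}}\,d\By\,d\Bx.
\]
On each diagonal block, since $Q_k$ is (locally) an epigraph with zero Lipschitz constant, Theorem \ref{thm:Korn.Epigraph} combined with a scaling argument gives $|\Bu|_{W^{s,p}(Q_k)}\le C([\Bu]_{\mathcal{X}^{s,p}(\hat Q_k)}+a_k^{-s}\min_{\Br\in\mathcal{R}}\|\Bu-\Br\|_{L^p(\hat Q_k)})$ which, after invoking Lemma \ref{K-P} (Korn--Poincaré on the enlarged cube), is bounded by $C[\Bu]^p_{\mathcal{X}^{s,p}(\hat Q_k)}$; summing using the finite intersection property controls the diagonal sum by $[\Bu]^p_{\mathcal{X}^{s,p}(D)}$. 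The off-diagonal sum is split according to whether $|\Bx-\By|$ is comparable to or much larger than $a_k+a_j$: the first case reduces to the diagonal one on an enlarged cube, and the second is estimated by the raw $L^p$ bound $|u_2(\Bx)-u_2(\By)|\le |u_2(\Bx)|+|u_2(\By)|$, producing the $\|\Bu\|_{L^p(D)}$ contribution via a finite integral of the kernel. Combining these pieces yields $|u_2|^p_{W^{s,p}(D)}\le C([\Bu]^p_{\mathcal{X}^{s,p}(D)}+\|\Bu\|^p_{L^p(D)})$, and substituting back gives the desired bound \eqref{4.3}.
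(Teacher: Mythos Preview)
Your approach has a genuine gap, and it stems from using the wrong extension operator. You carry over the epigraph extension \eqref{ext-epigraph} unchanged, which forces the appearance of the term $|u_2|_{W^{s,p}(D)}$ on the right-hand side, and your attempt to control this term by $[\Bu]_{\mathcal{X}^{s,p}(D)}+\|\Bu\|_{L^p(D)}$ does not work. In the off-diagonal ``far'' part of your Whitney decomposition, the kernel integral $\int_{\{|\By-\Bx|\gtrsim a_k\}}|\Bx-\By|^{-n-sp}\,d\By$ is of order $a_k^{-sp}\sim \mathrm{dist}(\Bx,\partial D)^{-sp}$, not a uniform constant. Thus the raw bound $|u_2(\Bx)-u_2(\By)|\le |u_2(\Bx)|+|u_2(\By)|$ yields the Hardy-weighted integral $\int_D |u_2(\Bx)|^p\,\mathrm{dist}(\Bx,\partial D)^{-sp}\,d\Bx$, not $\|\Bu\|_{L^p(D)}^p$. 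There is no reason this is controlled by $[\Bu]^p_{\mathcal{X}^{s,p}(D)}+\|\Bu\|^p_{L^p(D)}$ for general $\Bu\in\mathcal{X}^{s,p}(D)$; in fact, bounding $|u_2|_{W^{s,p}(D)}$ in this way is essentially equivalent to the fractional Korn second inequality on $D$ that the proposition is meant to produce, so the argument is circular.

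The paper avoids this trap by \emph{modifying} the extension, following Nitsche: in the first component of $E(\Bu)$ on $D_-$ one adds the correction $\alpha k(1+\lambda)\bigl(u_2^{\lambda}(\Bx)-u_2^{\mu}(\Bx)\bigr)$. Because $f$ is linear, this extra term produces an exact algebraic cancellation after the change of variables $\Bz=\Phi_\lambda(\Bx)$, $\Bw=\Phi_\lambda(\By)$, giving
\[
(\tilde E_\lambda(\Bu)(\Bz)-\tilde E_\lambda(\Bu)(\Bw))\cdot(\Phi_\lambda^{-1}(\Bz)-\Phi_\lambda^{-1}(\Bw))=k\,(\Bu(\Bz)-\Bu(\Bw))\cdot(\Bz-\Bw),
\]
so $[E(\Bu)]_{\mathcal{X}^{s,p}(D_-)}\le C[\Bu]_{\mathcal{X}^{s,p}(D)}$ with \emph{no} $|\Bu|_{W^{s,p}(D)}$ term. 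The price is a new contribution $\mathrm{II_{new}}$ in the mixed integral involving $(u_2^\lambda-u_2^\mu)(x_1-y_1)$, but this is handled by the Hardy-type Lemma~\ref{lem:2.4}, suitably adapted: since $D$ is only \emph{half} of an epigraph, the Whitney cover must be built from dyadic parallelograms obtained as images of dyadic squares under the shear $(x_1,x_2)\mapsto(x_1,x_2+\alpha x_1)$, so that the enlarged parallelograms stay in $D$. You should also note that your direct appeal to Lemma~\ref{lem:2.4} for $I_{\mathrm{mix}}^3$ already needs this adaptation, since the doubled Whitney cubes of the epigraph $\{x_2>\alpha x_1\}$ may leave the half-plane $\{x_1>0\}$.
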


\begin{proof}
{   As before, it suffices to prove the inequality for $\Bu\in C^{1}_c(\overline{D};\mathbb{R}^2)$.} 
Following \cite{nitsche1981korn}, we set   $E(\Bu)(\Bx) = \tilde{E}_{\lambda} (\Bu)(\Bx) + \tilde{E}_{\mu}(\Bu)(\Bx)$, for $\Bx\in D_{-}$, where 
\begin{align}\label{3.4}
\tilde{E}_{\lambda} (\Bu)(\Bx) &= (k u_1^\lambda(\Bx) + \alpha k (1+\lambda) u_2^\lambda(\Bx), m u_2^\lambda(\Bx))\nonumber\\ 
\tilde{E}_{\mu} (\Bu)(\Bx) &= (\ell u_1^\mu(\Bx) - \alpha k (1+\lambda) u_2^\mu(\Bx), q u_2^\mu(\Bx)),
\end{align} and $E(\Bu)(\Bx) = \Bu(\Bx)$, if $\Bx\in D$. 
The constants $\lambda$, $\mu$, $k$, $\ell$, $m$, $q$, satisfy the constraints \eqref{eq:ReflectionConditions}-(\ref{Def.LambdaMu}), and the functions $u_i^{\lambda}$ and $u_i^{\mu}$ are defined as before in \eqref{ulambdamu}. Note that this is the extension for epigraphs with the additional summand $\alpha k(1+\lambda)(u_2^\lambda(\Bx)-u_2^\mu(\Bx))$ in the 1st component of 
$E(\Bu)(\Bx)$ for $\Bx\in D_{-}$.  The proof of the estimate in \eqref{4.3} follows the calculations done for the case of the epigraphs. Below we sketch the proof only including those calculations that are new. As before, we begin by decomposing the integral as  
\begin{align}
\label{angle-decomp}
[E(\Bu)]^p_{\mathcal{X}^{s,p}(\mathbb{R}^{2}_{x_1 > 0})}= [E(\Bu)]^p_{\mathcal{X}^{s,p}(D)} &+  [E(\Bu)]^p_{\mathcal{X}^{s,p}(D_{-})}\nonumber\\
&+ 2\int_{D_{-}}\int_{D} \frac{\left|(E(\Bu)(\Bx) - E(\Bu)(\By)) \cdot(\Bx-\By)\right|^p}{|\Bx-\By|^{n + (s+1)p}}d\By d \Bx.
\end{align}
We {  need} to estimate the last two terms. Clearly, 
\[
[E(\Bu)]^p_{\mathcal{X}^{s,p}(D_{-})} \leq 2^{p-1}\left([\tilde{E}_{\lambda}(\Bu)]^p_{\mathcal{X}^{s,p}(D_{-})}  +[\tilde{E}_{\mu}(\Bu)]^p_{\mathcal{X}^{s,p}(D_{-})}\right).
\]
A simple calculation reveals that the additional summands $\alpha k(1+\lambda)u_2^\lambda(\Bx)$ and $\alpha k(1+\lambda)u_2^\mu(\Bx)$ make it possible to simplify further. Indeed, after change of variables $\Bz= \Phi_{\lambda}(\Bx)=(x_1, \alpha x_1 + \lambda(\alpha x_1 - x_2))$  and $\Bw= (y_1, \alpha y_1 + \lambda(\alpha y_1 - y_2))$, we have that
{  \[
(\tilde{E}_{\lambda}(\Bu)(\Bz) - \tilde{E}_{\lambda}(\Bu)(\Bw))\cdot (\Phi_{\lambda}^{-1}(\Bz) - \Phi_{\lambda}^{-1}(\Bw)) = k(\Bu(\Bz) - \Bu(\Bw))\cdot(\Bz-\Bw),
\] and hence }
\[
[\tilde{E}_{\lambda}(\Bu)]^p_{\mathcal{X}^{s,p}(D_{-})} \leq C(\alpha, \lambda, k, p, s) [\Bu]^{p}_{\mathcal{X}^{s,p}(D)}.
\]
Similar estimates also holds for $[\tilde{E}_{\mu}(\Bu)]^p_{\mathcal{X}^{s,p}(D_{-})}$, after noting that the relations between the parameters in \eqref{eq:ReflectionConditions}, implies that $\alpha k(1 + \lambda) = -\alpha \ell(1 + \mu)$. The point here is that 
the additional summand $\alpha \,k(1+\lambda)(u_2^\lambda(\Bx)-u_2^\mu(\Bx))$ in the first component of the extension facilitates a cancellation of the extra term, {   which is $|\Bu|_{W^{s,p}(D)}$ multiplied by the Lipschitz constant $\alpha$,} that would appear if we otherwise use the extension operator \eqref{ext-epigraph} treating the domain as a Lipschitz domain. {   This eliminates the need for the Lipschitz constant to be small so as to absorb the term involving $|\Bu|_{W^{s,p}(D)}$.} What is left now is estimating the mixed integral $\int_{D_{-}}\int_{D}\dots d\By d\Bx$ appearing in \eqref{angle-decomp}. This can be estimated as in the proof of Proposition \ref{prop:extension}. The only difference is that there will be an additional term due to the new term 
$\alpha k(1+\lambda)(u_2^\lambda(\Bx)-u_2^\mu(\Bx))$. This amounts to estimating the expression 
\begin{equation}
\label{4.6} 
\mathrm{II_{new}}=\int_{D_{-}}\int_D\frac{|(u_2^\lambda(\Bx)-u_2^\mu(\Bx))\cdot(x_1-y_1)|^p}{|\Bx-\By|^{2+p+ps}}d\By d\Bx
\end{equation} 
in terms of the norm of ${\Bu}$ in $\mathcal{X}^{s,p}(D)$. 
To prove \eqref{4.6}, by the change of variable  $\Bz=\Phi_1(\Bx),$ we have 
\begin{align*}
\mathrm{II_{new}}&=C\int_{D}\int_D\frac{|(u_2^\lambda(\Phi_1^{-1}(\Bz))-u_2^\mu(\Phi_1^{-1}(\Bz)))\cdot(z_1-y_1)|^p}{|\Phi_1^{-1}(\Bz)-\By|^{2+p+ps}}d\By d\Bz\\ \nonumber
&\leq C\int_D I(\Bz)|u_2^\lambda(\Phi_1^{-1}(\Bz))-u_2^\mu(\Phi_1^{-1}(\Bz))|^p d\Bz,
\end{align*} 
where for any fixed $\Bz\in D$ we have set
\[
I(\Bz)=\int_D \frac{|z_1-y_1|^p}{|\Phi_1^{-1}(\Bz)-\By|^{2+p+ps}} d\By.
\] 
Using Lemma \ref{lem:A.2} from the appendix we have that for each $\Bz\in D,$  
\begin{equation*}
I(\Bz) \leq \frac{C}{|z_2-f(z_1)|^{ps}}, \end{equation*} 
with a constant $C$ that depends only on $p,s,$ and the Lipschitz constant of $f(z_1) = \alpha z_1$, which is $\alpha$ in this case. 
As a consequence, we have 
\begin{align}
\label{4.9} 
\mathrm{II_{new}} &\leq C \int_D \frac{|u_2^\lambda(\Phi_1^{-1}(\Bz))-u_2^\mu(\Phi_1^{-1}(\Bz))|^p}{|z_2-f(z_1)|^{ps}}d\Bz\\ \nonumber
 &= C \int_D \frac{|u_2(z_1,f(z_1)+\lambda(z_2-f(z_1)))-u_2(z_1,f(z_1)+\mu(z_2-f(z_1)))|^p}{|z_2-f(z_1)|^{ps}}d\Bz. 
 \end{align} 
In order to finish the proof we need to estimate the expression in (\ref{4.9}) by the seminorm  $[\Bu]_{\mathcal{X}^{s,p}(D)}.$ This would be straightforward by Lemma~\ref{lem:2.4}, if $D$ was an epigraph (but $D$ is just part of an epigraph). We demonstrate below how the proof of Lemma~\ref{lem:2.4} can be adjusted to this situation. To that end, we need to provide an appropriate Whitney-type cover of $D$. Let 
$F=\{\Bx\in\mathbb R^2 \ : \ x_1,x_2>0\}$ be the first quadrant in $\mathbb R^2.$ We cover $F$ by horizontal rows of identical dyadic cubes as follows: Cover the strip $F\cap\{2^{k}\leq x_2\leq 2^{k+1}\}$ by closed cubes, $Q_k$,  of side length $2^k,$ for every $k\in\mathbb Z$ starting from the $x_2-$axis. {The resulting cover is exactly the restriction of the Whitney cover of the upper half-space on the first quadrant. Notice here that, $Q_k$ is $2^{k}$ distant away from the $x_1$-axis, and the doubled cubes $\hat{Q}^{+}_k$ in the direction of the positive axes have a finite intersection property.} Now, the domain $D$ is the image of $F$ under the bi-Lipschitz mapping 
$$
\varphi : F \to D \text{ defined by } \varphi(x_1,x_2)=(x_1,x_2{+}\alpha x_1).
$$
Each of the dyadic cubes $Q_k$ (from the covering of $F$) will get mapped to a parallelogram $P_k$ which will constitute a Whitney-type cover of {$D,$} by a sequence of dyadic parallelograms. {It is not difficult to see that $P_k$ is a translation of $2^{k}$ times the base parallelogram $\bar{P}_0$ determined by the points $(0,0),$ $(0, 1)$, $(1, \alpha),$ and $(1, 1+\alpha)$. This construction gives rise to a perfect cover of $D,$ as the parallelograms are essentially disjoint. Moreover, for any $k$, the height of parallelogram $P_k$ is comparable to its distant away from the line $x_2 = \alpha x_1 = f(x_1)$, and the finite intersection property of enlarged cubes of the initial Whitney cover will also persist under the mapping $\varphi$. We denote the image of the doubled cubes ${\hat Q}^{+}_k$ by ${\hat P}^{+}_k$. That is, ${\hat P}^{+}_k=\varphi({\hat Q}^{+}_k )$ and, from the construction, these are just translations of $2^k$ times $\bar{P}^{+}_0$, which is the paralellogram determined by the points $(0, 0)$, $(0, 2)$, $(2, 2\alpha)$, $(2, 2  + 2\alpha)$. With this at hand, we can now repeat the argument in the proof of Lemma \ref{lem:2.4}. Since the argument is almost the same for this construction, we only demonstrate  
the analogue of the inequality (\ref{2.23}). To that end,  we have  
\begin{align}
\label{4.7}
 \int_{P_k} &|u_2(\underbrace{z_1,f(z_1)+\lambda(z_2-f(z_1))}_{=\Phi_\lambda^\ast (\Bz)})-
 u_2(\underbrace{z_1,f(z_1)+\mu(z_2-f(z_1))}_{=\Phi_\mu^\ast (\Bz)})|^p
 d\Bz \\ \nonumber
 &\leq 2^{p-1}\int_{P_k} |u_2(\Phi_\lambda^\ast (\Bz))-(\tilde{\BA}_k\cdot\Phi_\lambda^\ast(\Bz)+\Bb_k)_2|^pd\Bz \\ \nonumber 
&+2^{p-1}\int_{P_k} |u_2(\Phi_{\mu}^\ast (\Bz))-(\tilde{\BA}_k\cdot\Phi_{\mu}^\ast(\Bz)+\Bb_k)_2|^p d\Bz \\ \nonumber 
&\leq C\|\Bu(\By)-\tilde{\BA}_k\cdot\By-\Bb_k\|_{L^p(\hat{P}^{+}_k)}^p \\ \nonumber
&\leq C \tilde{a}_k^{ps}[\Bu(\By)]_{\mathcal{X}^{s,p}(\hat{P}^{+}_k)}^p,
\end{align}
where $\tilde{a}_k$ is the height of $P_{k}$ and, as before, we can show that for appropriately chosen $\lambda$ and $\mu$, depending on $\alpha$ and $n$, $\Phi^*_\lambda(P_k), \Phi^*_\mu(P_k)\subset{\hat P}^{+}_k$. Notice that the choice of the infinitesimal rigid displacement $\Bx\mapsto \tilde{\BA}_k\cdot \Bx + \Bb_k$ as well as the last inequality follow from a version of Poincar\'e-Korn inequality over the parallelogram $\hat{P}^{+}_k$ (see Remark \ref{Korn-Poincare-remark}). Indeed, after noting that the area of the base parallelogram $\bar{P}_0$ is 1, then by a simple scaling we have that for any $\tau > 0,$
\[
\min_{\Br\in \mathcal{R}}\|\Bu - \Br\|_{L^{p}(\tau \bar{P}_{0})} \leq C \tau^{s} [\Bu]_{\mathcal{X}^{s,p}(\tau \bar{P}_0)},
\]
where $C$ independent of $\tau.$
Putting together the analogue of (\ref{2.21}) and (\ref{4.7}) we obtain that 
\begin{equation*}
\int_{P_k}\frac{|u_2(\Phi_\lambda^\ast(\Bx))-
u_2(\Phi_{\mu}^\ast(\Bx))|^p}{|x_2-f(x_1)|^{ps}}d\Bx
\leq C[\Bu(\By)]_{\mathcal{X}^{s,p}(\hat P_k)}^p.
\end{equation*}
The rest is similar to the proof of Lemma~\ref{lem:2.4}.
}

\end{proof}

\begin{remark}
Following the above procedure, we can show that the above extension operator is also bounded from $W^{s,p}(D,\mathbb R^2)$ to $W^{s,p}(\mathbb{R}^{2}_{x_1 > 0},\mathbb{R}^2)$. The proposition also implies the fractional Korn's second inequality for planar angular domains. {   
Indeed, let $\Bu\in C^{1}_{c}(\overline{D},\mathbb{R}^2)$. Then by Proposition \ref{prop:extesionforangle}, we can extend $\Bu$  to $E(\Bu)\in \mathcal{X}^{s,p}(\mathbb{R}^{2}_{x_1 > 0})$ 
such that 
\[
[E(\Bu)]_{\mathcal{X}^{s,p}(\mathbb{R}^{2}_{x_1 > 0})} \leq C([\Bu]_{\mathcal{X}^{s,p}(D)} + \|\Bu\|_{L^{p}(D)}).
\]
Noting that $E(\Bu)$ is defined on an epigraph, up to a rotation, we may apply the fractional Korn's inequality for epigraphs, Theorem \ref{thm:Korn.Epigraph}, and obtain 
\begin{equation}\label{Korns-angular}
|\Bu|_{W^{s, p}(D,\mathbb{R}^{2})} \leq |E(\Bu)|_{W^{s,p}(\mathbb{R}^{2}_{x_1 > 0},\mathbb{R}^2)} \leq C [E(\Bu)]_{\mathcal{X}^{s,p}(\mathbb{R}^{2}_{x_1 > 0})} \leq C([\Bu]_{\mathcal{X}^{s,p}(D)} + \|\Bu\|_{L^{p}(D)}).
\end{equation}
}
\end{remark}

\subsection{The case of planar convex polygonal domains}
 In this subsection, we show that extension of vector fields defined in planar convex polygonal domains $D$ to $\bbR^2$ with controlled $\|\cdot\|_{\mathcal{X}^{s,p}}$ norm is possible.
We prove the following extension result. 

{  
\begin{proposition}
Let $n=2$, $s\in (0,1)$ and $1 < p <\infty$. 
Let $\Omega$ be a convex polygonal domain, i.e. $\partial \Omega$ is a simple closed curve that is piecewise affine, with finitely many vertices with interior angle in { $(0,\pi)$.}
Then there exists a positive constant $C$, depending only on $s,p$ and $\Omega,$
    such that for all $\Bu \in W^{s,p}(\Omega;\mathbb{R}^2)$, one has
\begin{equation*}
|{\Bu}|^p_{W^{s,p}(\Omega)} \leq C([{\Bu}]^p_{\mathcal{X}^{s,p}(\Omega)} + \|\Bu\|^p_{L^{p}(\Omega)}).
\end{equation*}
\end{proposition}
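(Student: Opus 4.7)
The plan is to adapt the partition-of-unity scheme from the proof of \eqref{thm:korn2nd} in Section \ref{sec:epigraph} to the polygonal geometry, exploiting two already-established ingredients: the Korn inequality for epigraphs (Theorem \ref{thm:Korn.Epigraph}), which applies along straight edges because they have Lipschitz constant $0$, and the Korn-type inequality \eqref{Korns-angular} on an unbounded angular domain established in the remark following Proposition \ref{prop:extesionforangle}, which handles each vertex neighborhood because the interior angles lie in $(0,\pi)$.

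First, enumerate the vertices $\By_1,\dots,\By_m$ of $\partial\Omega$ and fix $r>0$ small enough that the balls $B_{2r}(\By_j)$ are pairwise disjoint and that $\Omega\cap B_r(\By_j)$ is exactly a bounded wedge of angle $\theta_j\in(0,\pi)$; after a rigid motion, this wedge is a subset of an unbounded angular domain of the form \eqref{4.1} with an appropriate $\alpha$. Next, cover the compact set $\partial\Omega\setminus\bigcup_j B_{r/2}(\By_j)$, which is a finite union of closed subsegments of open edges, by finitely many balls $B_{r_k}(\Bz_k)$ centered on the edges whose radii are small enough that each $\Omega\cap B_{r_k}(\Bz_k)$ is a half disk (a portion of a half-plane after a rigid motion, i.e.\ an epigraph with $f\equiv 0$). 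Choose $\Omega_0\Subset\Omega$ so that together with the above balls it covers $\overline{\Omega}$, and select a smooth partition of unity $\{\varphi_\alpha\}$ subordinate to this finite open cover.

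For $\Bu\in C^1(\overline{\Omega};\mathbb{R}^2)$, set $\Bu_\alpha=\varphi_\alpha\Bu$ so that $\Bu=\sum_\alpha\Bu_\alpha$. After extension by $\boldsymbol{0}$, each $\Bu_\alpha$ lies in $\mathcal{X}^{s,p}$ of the associated enlarged set (the whole of $\mathbb{R}^2$ for the interior piece, an unbounded half-plane for an edge piece, an unbounded angular domain for a vertex piece), with seminorm controlled by $\|\Bu\|_{\mathcal{X}^{s,p}(\Omega)}$ via \eqref{prod-zero-ext}. To estimate $|\Bu_\alpha|_{W^{s,p}(\Omega)}$, I would apply the fractional Korn's second inequality on $\mathbb{R}^2$ for the interior piece; Theorem \ref{thm:Korn.Epigraph} (with $M_0=0$, so the smallness hypothesis is trivially satisfied) for each edge piece after rotation to a half-plane; and \eqref{Korns-angular} for each vertex piece after rotation to an angular domain of the form \eqref{4.1}. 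Summing via $|\Bu|_{W^{s,p}(\Omega)}\leq\sum_\alpha|\Bu_\alpha|_{W^{s,p}(\Omega)}$, and then extending to general $\Bu \in W^{s,p}(\Omega;\mathbb{R}^2)$ by density of $C^1(\overline{\Omega};\mathbb{R}^2)$ (or by Fatou as in the proof of Theorem \ref{thm:Korn.Epigraph}), yields the desired inequality.

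I expect the main obstacle to be the vertex pieces: the angular-domain estimate \eqref{Korns-angular} carries an additional $\|\Bu\|_{L^p}$ term (absent from the half-plane estimate), so one must confirm that this extra contribution, after the rigid motion and zero extension, is absorbed into the global $\|\Bu\|_{L^p(\Omega)}$ rather than generating an uncontrolled term. A secondary check is that each vertex neighborhood really embeds as a bounded wedge inside an unbounded angular domain \eqref{4.1}, which follows from convexity of $\Omega$ and the strict bound $\theta_j<\pi$. Once these cut-off and geometric verifications are in place, the proof reduces to summing finitely many already-proven inequalities exactly as in Section \ref{sec:epigraph}.
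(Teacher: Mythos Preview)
Your proposal is correct and follows essentially the same partition-of-unity strategy as the paper, with one cosmetic difference: you cover $\partial\Omega$ with small vertex balls plus additional edge-centered half-disks, whereas the paper uses only vertex-centered balls large enough to cover the whole boundary (convexity ensures each $\Omega_j$ still sits inside the corresponding angular domain). Both arguments reduce to applying \eqref{Korns-angular} at the vertices, the $\mathbb{R}^2$ inequality on the interior piece, and gluing via \eqref{prod-zero-ext}; your extra $\|\Bu\|_{L^p}$ worry is harmless since $\|\Bv_j\|_{L^p(K_j^+)}=\|\Bu_j\|_{L^p(\Omega_j)}\leq\|\varphi_j\|_\infty\|\Bu\|_{L^p(\Omega)}$.
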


\begin{proof}
The proof is similar to that of inequality \eqref{thm:korn2nd}. Choose an open set $\Omega_0 \Subset \Omega$ and open balls $B_{r_j}(\By_j)$, for $j = 1,\ldots,N$ with centers $\By_j$ at the vertices of $\Omega$ such that
\begin{enumerate}
    \item $\Omega = \cup_{j=0}^N \Omega_j$ where $\Omega_j = \Omega \cap B_{r_j}(\By_j)$ for $j = 1,\ldots,N$ and $\By_j\notin B_{r_i}(\By_i)$ if $i\neq j$. 
    \item For every $1\leq j\leq N$, define $T_j: B_{r_{j}}(\By_j) \to \mathbb{R}^{2}$ to be the operator consisting of the translation $\By_j \to 0$ and a rotation such that $T_j(\partial \Omega\cap B_{r_j}(\By_j) )$ coincides with part of an angular planar domain $\{\Bx \in \bbR^2 \, : \, 0 < x_1, \; \alpha_j x_1 < x_2 \}$, for $\alpha_j \in \bbR$. 
\end{enumerate}
Set $Q_j = T_j (B_{r_j}(\By_j)),$ and also define
\[\begin{split}
Q_j^+ := \{ \Bx \in Q_j \, : \, 0 < x_1, \;  \alpha_j x_1 < x_2 \}\,, &\qquad
Q_j^- := \{ \Bx \in Q_j \, : \, 0 < x_1, \;  \alpha_j x_1 > x_2 \}\,, \\
K_j^+ := \{ \Bx \in \mathbb{R}^2 \, : \, \, 0 < x_1, \;  \alpha_j x_1 < x_2 \}\,, &\qquad
K_j^- := \{ \Bx \in \mathbb{R}^2 \, : \, 0 < x_1, \;  \alpha_j x_1 > x_2 \}\,.
\end{split}\]
We may choose the map $T_j$ so that  $T_j(\Omega_j) = Q_{j}^{+}$. Note that $T_j$ is a bi-Lipschitz map with Lipschitz constant depending only on $\Omega$.  Let $\{ \varphi_j \}_{j=0}^N \subset C^{\infty}_c(\mathbb{R}^2;\mathbb{R})$ be a $C^{\infty}$ partition of unity subordinate to the collection $\{\Omega_0\}\cup \{ B_{r_j}(\By_j) \}_{j=1}^N$. Then for every $1\leq j\leq N,$ we have $\text{supp}(\varphi_j) \subset B_{r_j}(\By_j),$  $\text{dist}( \By, \text{supp}(\varphi_j)) \geq \beta_j > 0$ for every $\By \in \Omega \setminus \overline{\Omega_j} $. We also have $\text{supp} (\varphi_0)\subset \Omega_0$, $\text{dist}( \By, \text{supp}(\varphi_0)) \geq \beta_0 > 0$ for every 
$\By \in \Omega \setminus \overline{\Omega_0},$ and $\textstyle \sum_{j=0}^N \varphi_j \equiv 1$ on $\Omega$. 

Suppose now $\Bu\in C^{1}(\overline{\Omega};\mathbb{R}^2)$. Define $\Bu_j := \varphi_j \Bu$, for $j=0, 1, \dots, N.$ Following the exact procedure in the proof of Theorem \ref{Intro:mainthm} we show that 
\begin{equation}\label{est-for-u0-poly}
|\Bu_0|_{W^{s,p}(\Omega)} \leq C \|\Bu\|_{\mathcal{X}^{s,p}(\Omega)} 
\end{equation}
and 
\begin{equation}\label{est-for-uj-by-uj-poly}
|\Bu_j|_{W^{s,p}(\Omega)} \leq 
C \|\Bu\|_{\mathcal{X}^{s,p}(\Omega)}, \qquad j=1, 2, \dots, N,
\end{equation}
where $C$ is a positive constant that depends on $s,p,$ $\text{diam}(\Omega)$, 
and the partition of unity. Therefore by \eqref{est-for-u0-poly} and \eqref{est-for-uj-by-uj-poly}, we have 
\[
[\Bu]_{W^{s,p}(\Omega)} = \Big[\sum_{j=0}^{N} \Bu_j\Big]_{W^{s,p}(\Omega)} \leq \sum_{j=0}^{n}[\Bu_j]_{W^{s,p}(\Omega)} \leq C\|\Bu\|_{\mathcal{X}^{s,p}(\Omega)}.
\]
The estimate for vector fields  $\Bu$ in $W^{s,p}(\Omega;\mathbb{R}^2)$ follows by density. This completes the proof. 
\end{proof}
}

\section*{Acknowledgements}
Davit Harutyunyan's research is supported by NSF 
DMS-2206239. Tadele Mengesha's research is supported by NSF DMS-1910180 and DMS-2206252. James Scott acknowledges support from NSF DMS-1937254 and NSF DMS-2012562. { We are grateful to the anonymous referees who have read the paper very carefully and made suggestions that improved the paper.}

\appendix
\section{Some technical lemmas} 

The following estimate is used in the proof of boundedness of the extension operator in cylindrical epigraphs. The lemma originally appeared in \cite{MS2022} with the restriction that the base function $f$ has a small Lipschitz constant. We prove the lemma without any restriction on $f$ here.

\begin{lemma}
\label{lem:A.1}
Let $f: \mathbb{R}^{n-1}\to \mathbb R$ be Lipschitz with Lipschitz constant $M.$ Let $D\subset\mathbb R^n$ be an epigraph supported by $f$.
For $\lambda>0$ let $\Phi_\lambda(\Bx)\colon  D_{-} \to D$ be as in \eqref{Phi}. Then one has 
$$|\Bx-\By|\leq C|\Phi_\lambda^{-1}(\Bx)-\By| \quad\text{for all}\quad \Bx,\By\in D,$$
for some constant $C=C(\lambda,M).$
\end{lemma}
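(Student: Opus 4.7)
The plan is to introduce the intermediate point $\Bz := \Phi_\lambda^{-1}(\Bx) \in D_-$, which lies directly below $\Bx$ (same horizontal coordinates), and then use the triangle inequality $|\Bx - \By| \leq |\Bx - \Bz| + |\Bz - \By|$. The task reduces to showing $|\Bx - \Bz| \leq C'|\Bz - \By|$ for some $C' = C'(\lambda, M)$.

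First I would compute $|\Bx - \Bz|$ explicitly from the definition of $\Phi_\lambda^{-1}$. Since $\Bz' = \Bx'$, only the vertical coordinates differ, and a direct computation gives
\[
|\Bx - \Bz| = |x_n - z_n| = \left(1 + \tfrac{1}{\lambda}\right)(x_n - f(\Bx')) = (1+\lambda)(f(\Bz') - z_n).
\]
Thus it suffices to bound $(1+\lambda)(f(\Bz') - z_n)$ by $C'|\Bz - \By|$.

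The estimate of $f(\Bz') - z_n$ would proceed by inserting $\pm f(\By')$ and splitting into two terms. The Lipschitz property of $f$ controls the first: $f(\Bz') - f(\By') \leq M|\Bz' - \By'|$. For the second term $f(\By') - z_n$, the crucial use of the hypothesis $\By \in D$ gives $f(\By') \leq y_n$, so this quantity is at most $(y_n - z_n)_+ \leq |y_n - z_n|$. Combining, $f(\Bz') - z_n \leq M|\Bz' - \By'| + |y_n - z_n| \leq \sqrt{M^2+1}\,|\Bz - \By|$ by Cauchy--Schwarz. Plugging this into the triangle inequality yields the bound with $C = (1+\lambda)\sqrt{M^2+1} + 1$.

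There is no serious obstacle: the argument is a short direct computation centered on a single well-chosen triangle inequality. The one subtlety worth flagging is why the hypothesis $\By \in D$ is essential (rather than, say, $\By$ being allowed anywhere): it converts the one-sided quantity $f(\By') - z_n$, which could a priori be very large and positive, into a quantity bounded by the vertical separation $|y_n - z_n|$. Without this, the Lipschitz constant $M$ could not be absorbed cleanly. This also explains why no smallness of $M$ is required in the statement -- the estimate $|\Bx-\By| \leq C |\Phi_\lambda^{-1}(\Bx) - \By|$ holds for any $M$, with $C$ depending on $M$ only through a mild factor $\sqrt{M^2+1}$.
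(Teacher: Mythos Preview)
Your proof is correct and takes a genuinely different, more streamlined route than the paper. The paper expands $|\Phi_\lambda^{-1}(\Bx)-\By|^2$ and argues by a case analysis: first on the sign of $x_n-y_n$, and then, when $x_n>y_n$, on whether $|\Bx'-\By'|\geq\epsilon|\Bx-\By|$ for a carefully chosen $\epsilon=\epsilon(\lambda,M)$, eventually showing that the vertical component of $\Phi_\lambda^{-1}(\Bx)-\By$ dominates $\tfrac{1}{2\lambda}(x_n-y_n)$. Your argument replaces all of this with a single triangle inequality through the reflected point $\Bz=\Phi_\lambda^{-1}(\Bx)$ and the observation that the vertical drop $|\Bx-\Bz|=(1+\lambda)(f(\Bz')-z_n)$ can be bounded by $(1+\lambda)\sqrt{M^2+1}\,|\Bz-\By|$ using only the Lipschitz estimate $f(\Bz')-f(\By')\leq M|\Bz'-\By'|$ and the membership $\By\in D$ (which gives $f(\By')-z_n\leq y_n-z_n\leq |y_n-z_n|$). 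This yields the explicit constant $C=(1+\lambda)\sqrt{M^2+1}+1$, avoids any case distinction, and makes transparent why no smallness of $M$ is needed.
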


\begin{proof}
We have $\Phi_\lambda^{-1}(\Bx)=(\Bx', f(\Bx')+\frac{1}{\lambda}(f(\Bx')-x_n)),$ hence we can calculate for any $\Bx,\By\in D_{-}:$
$$
|\Phi_\lambda^{-1}(\Bx)-\By|^2=|\Bx'-\By'|^2+\left|\frac{1}{\lambda}x_n+y_n-(1+\frac{1}{\lambda})f(\Bx')\right|^2.
$$
In the case $x_n\leq y_n$ we have 
\begin{align*}
\frac{1}{\lambda}x_n+y_n-(1+\frac{1}{\lambda})f(\Bx')
&=(y_n-x_n)+(1+\frac{1}{\lambda})(x_n-f(\Bx'))\\
&\geq y_n-x_n\geq 0
\end{align*}
thus we get 
$|\Phi_\lambda^{-1}(\Bx)-\By|\geq|\Bx-\By|.$
Assume in the sequel $x_n> y_n.$ Let $\epsilon=\epsilon(M,\lambda)\in (0,1/2]$ be a small constant yet to be chosen. If $|\Bx'-\By'|\geq \epsilon|\Bx-\By|,$ then we are done.
Assuming further $|\Bx'-\By'|<\epsilon|\Bx-\By|,$ we have 
$$|\Bx'-\By'|<\frac{\epsilon}{\sqrt{1-\epsilon^2}}|x_n-y_n|<2\epsilon|x_n-y_n|.$$
We can then calculate again
\begin{align*}
\frac{1}{\lambda}x_n+y_n-(1+\frac{1}{\lambda})f(\Bx')&=\frac{1}{\lambda}(x_n-y_n)+(1+\frac{1}{\lambda})(y_n-f(\By'))+(1+\frac{1}{\lambda})(f(\By')-f(\Bx'))\\
&\geq \frac{1}{\lambda}(x_n-y_n)-M(1+\frac{1}{\lambda})|\By'-\Bx'|\\
&\geq (\frac{1}{\lambda}-\epsilon M(1+\frac{1}{\lambda}))(x_n-y_n)\\
&=\frac{1}{2\lambda}(x_n-y_n)\\
&\geq 0,
\end{align*} 
if we choose $\epsilon=\mathrm{min}(\frac{1}{2},\frac{1}{2M(1+\lambda)}).$ The proof is now complete.
 \end{proof}

The following estimate is used in the proof of the existence of a bounded extension operator on planar angular domains.  

\begin{lemma}
\label{lem:A.2}
{  Let} 
$f\colon \mathbb{R}^{n-1}\to \mathbb R$ be Lipschitz with Lipschitz constant $M.$ Let $D\subset\mathbb R^n$ be an  epigraph supported by $f$. For the map $\Phi_\lambda(\Bx', x_n) = (\Bx', (1+\lambda)f(\Bx') - \lambda x_n)\colon D_{-}\to D$ with $\lambda>0$, there exists  a constant $C,$ depending only on $n,p,s,\lambda$ and $M,$ such that
\begin{equation*}
I(\Bx)=\int_D \frac{{  |\Bx-\By|^p}}{|\Phi_\lambda^{-1}(\Bx)-\By|^{n+p+ps}} d\By \leq \frac{C}{|x_n-f(\Bx')|^{ps}}\quad\text{for all}\quad \Bx\in D. 
\end{equation*}
 
\end{lemma}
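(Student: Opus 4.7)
The plan is to apply Lemma~\ref{lem:A.1} to replace $|\Bx-\By|$ in the numerator by $|\Phi_\lambda^{-1}(\Bx)-\By|$, which collapses the kernel to a standard fractional one, and then to exploit the fact that $\Phi_\lambda^{-1}(\Bx)$ sits in $D_{-}$ at a distance from $\partial D$ comparable to $x_n-f(\Bx')$.

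Concretely, Lemma~\ref{lem:A.1} supplies a constant $C=C(\lambda,M)$ with $|\Bx-\By|\leq C|\Phi_\lambda^{-1}(\Bx)-\By|$ for all $\Bx,\By\in D$, so that
$$I(\Bx)\leq C^p\int_D \frac{d\By}{|\Phi_\lambda^{-1}(\Bx)-\By|^{n+ps}}.$$
Writing $\Bz:=\Phi_\lambda^{-1}(\Bx)=(\Bx',f(\Bx')-d(\Bx)/\lambda)$ with $d(\Bx):=x_n-f(\Bx')>0$, the task reduces to estimating $\int_D|\Bz-\By|^{-(n+ps)}d\By$ for this single point $\Bz\in D_{-}$.

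The key geometric step is to show $\mathrm{dist}(\Bz,\partial D)\geq c(\lambda,M)\,d(\Bx)$. For any $\By=(\By',f(\By'))\in\partial D$, the Lipschitz bound on $f$ gives
$$|\Bz-\By|^2\geq |\Bx'-\By'|^2+\bigl(d(\Bx)/\lambda - M|\Bx'-\By'|\bigr)^2,$$
and a short case split on whether $|\Bx'-\By'|$ exceeds $d(\Bx)/(2M\lambda)$ delivers the claim with the explicit choice $c=(2\lambda\max(1,M))^{-1}$. Since $\Bz\notin\overline{D}$, this passes to $|\Bz-\By|\geq c\,d(\Bx)$ for every $\By\in D$.

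With that in hand, the inclusion $D\subset\{\By:|\Bz-\By|\geq c\,d(\Bx)\}$ combined with a polar-coordinate computation yields
$$\int_D \frac{d\By}{|\Bz-\By|^{n+ps}}\leq \int_{|\Bw|\geq c\, d(\Bx)}\frac{d\Bw}{|\Bw|^{n+ps}}=\frac{\omega_{n-1}}{ps}\,(c\,d(\Bx))^{-ps},$$
which combined with the preceding display gives the lemma. I do not anticipate a real obstacle here; the only care needed is to track that the constant in Lemma~\ref{lem:A.1} and in the distance comparison depend only on the parameters listed in the statement, so that the final $C$ inherits the same dependence.
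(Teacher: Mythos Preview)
Your argument is correct and genuinely different from the paper's. The paper does not invoke Lemma~\ref{lem:A.1}; instead it writes $\Phi_\lambda^{-1}(\Bx)=(\Bx',\,(1+1/\lambda)f(\Bx')-x_n/\lambda)$ explicitly, substitutes $\By'-\Bx'=a\Bw'$ and $y_n+x_n-(1+1/\lambda)f(\Bx')\cdot\text{(etc.)}=at$ with $a=x_n-f(\Bx')$, and splits the domain into $E_1^\epsilon=\{|\By'-\Bx'|\geq \epsilon a\}$ and $E_2^\epsilon=\{|\By'-\Bx'|<\epsilon a\}$. On $E_1^\epsilon$ the scaled integrand is controlled because $|\Bw'|\geq\epsilon$; on $E_2^\epsilon$ the Lipschitz bound (with $\epsilon=1/(2M)$) forces $t\geq 1/2$, and again the scaled integral converges. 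Your route is more streamlined: by recycling Lemma~\ref{lem:A.1} you collapse the kernel to $|\Phi_\lambda^{-1}(\Bx)-\By|^{-(n+ps)}$ and then only need the single geometric fact $\mathrm{dist}(\Phi_\lambda^{-1}(\Bx),D)\gtrsim x_n-f(\Bx')$, after which a one-line polar integration finishes. The paper's computation is self-contained (it does not rely on Lemma~\ref{lem:A.1}), whereas yours trades that independence for brevity. One small presentational point: your displayed lower bound $|\Bz-\By|^2\geq |\Bx'-\By'|^2+(d(\Bx)/\lambda-M|\Bx'-\By'|)^2$ is only valid when $d(\Bx)/\lambda\geq M|\Bx'-\By'|$, but since your case split handles the complementary range via $|\Bz-\By|\geq|\Bx'-\By'|$ directly, the argument is unaffected.
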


\begin{proof}
For simplicity we will present a proof for $\lambda=1,$ the general case being completely similar. We have 
\begin{align*}
\int_D \frac{{  |\Bx-\By|^p}}{|\Phi_1^{-1}(\Bx)-\By|^{n+p+ps}} d\By&=\int_D \frac{{  (|\Bx'-\By'|^2 + |x_n-y_n|^2)^{p/2}}}{(|\Bx'-\By'|^2+|y_n+x_n-2f(\Bx')|^2)^{(n+p+ps)/2}} d\By.
\end{align*}
For $\epsilon>0$ yet to be chosen, and for any $\Bx\in D$ define the complementary subsets of $D$ as follows: 
\begin{equation*}
E_1^{\epsilon}(\Bx)=\{\By\in D \ : \ |\By'-\Bx'|\geq \epsilon (x_n-f(\Bx'))\},\quad E_2^{\epsilon}(\Bx)=\{\By\in D \ : \ |\By'-\Bx'|< \epsilon(x_n-f(\Bx'))\}.
\end{equation*}
In what follows, the constant $C$ may depend only on $n,p,s,M$ and $\epsilon.$ In the case $\By\in E_1^{\epsilon}(\Bx),$ substitute $a=x_n-f(\Bx')>0,$ $\By'-\Bx'=a\Bw'$ and $y_n+x_n-2f(\Bx')=at,$ where $|\Bw'|\geq \epsilon$ and $\Bw'$ belongs to a subset of $\mathbb R^{n-1}$ and $t$ belongs to a subset of $\mathbb R.$ We can then estimate {using the inequality  $(|\Bw'|^2+|t-2|^2)^{1/2}\leq C (|\Bw'|+|t|+1)$ that }
\begin{align*}
\int_{E_1^{\epsilon}(\Bx)}&\frac{ {  (|\Bx'-\By'|^2 + |x_n-y_n|^2)^{p/2}} }{(|\Bx'-\By'|^2+|y_n+x_n-2f(\Bx')|^2)^{(n+p+ps)/2}}d\By\\ \nonumber
&\leq \frac{1}{a^{ps}} \int_{\mathbb R}
\int_{\{\Bw'\in \mathbb R^{n-1} \ : \ |\Bw'|\geq \epsilon\}}
\frac{ {  (|\Bw'|^2 + |t-2|^2)^{p/2}} }{(|\Bw'|^2+|t|^2)^{(n+p+ps)/2}}d\Bw' dt\\ \nonumber
&\leq \frac{C}{a^{ps}}\int_{\mathbb R}
\int_{\{\Bw'\in \mathbb R^{n-1} \ : \ |\Bw'|\geq \epsilon\}} \frac{1}{(|\Bw'|+|t|)^{n+ps}} {   + \frac{1}{(|\Bw'|+|t|)^{n+ps+p}} }d\Bw' dt\\ \nonumber
&\leq \frac{C}{a^{ps}}\int_{\{\Bw\in \mathbb R^n \ : \ |\Bw|\geq \epsilon\}} \frac{1}{|\Bw|^{n+ps}} {    + \frac{1}{|\Bw|^{n+ps+p}} }d\Bw\\ \nonumber
&=\frac{C(\epsilon)}{a^{ps}},
\end{align*}
Consider now the case $\By\in E_2^{\epsilon}(\Bx).$ We have in that case 
\begin{align*}
y_n+x_n-2f(\Bx')&=(y_n-f(\By'))+(f(\By')-f(\Bx'))+x_n-f(\Bx')\\ \nonumber
&\geq -M|\By'-\Bx'|+x_n-f(\Bx')\\ \nonumber
&\geq (1-\epsilon M)(x_n-f(\Bx')). 
\end{align*}
Thus, if we choose $\epsilon=1/2M$, we will have $y_n+x_n-2f(\Bx')\geq (x_n-f(\Bx'))/2.$ Consequently, setting $\Bx'-\By'=a\Bw'$ and $y_n+x_n-2f(\Bx')=at,$ we will have that $|\Bw'|<\epsilon$ and $\Bw'$ belongs to a subset of $\mathbb R^{n-1},$ while $t$ belongs to a subset of $(1/2,\infty).$
We can estimate in a similar manner:
\begin{align*}
\int_{E_2^{\epsilon}(\Bx)}&\frac{ {  (|\Bx'-\By'|^2 + |x_n-y_n|^2)^{p/2}} }{(|\Bx'-\By'|^2+|y_n+x_n-2f(\Bx')|^2)^{(n+p+ps)/2}}d\By\\ \nonumber
&\leq \frac{1}{a^{ps}}\int_{1/2}^\infty 
\int_{\{\Bw'\in \mathbb R^{n-1} \ : \ |\Bw'|<\epsilon\}} \frac{ {  (|\Bw'|^2 + |t-2|^2)^{p/2}} }{(|\Bw'|^2+|t|^2)^{(n+p+ps)/2}}d\Bw'dt \\ \nonumber
&\leq \frac{1}{a^{ps}}
\int_{1/2}^\infty 
\int_{\{\Bw'\in \mathbb R^{n-1} \ : \ |\Bw'|<\epsilon\}}
\frac{1}{(|\Bw'|+|t|)^{n+ps}} {    + \frac{1}{( |\Bw'| + |t| )^{n+ps+p}} } d\Bw' dt\\ \nonumber
&\leq \frac{C}{a^{ps}}\int_{\{\Bw\in \mathbb R^n \ : \ |\Bw|>1/2\}} \frac{1}{|\Bw|^{n+ps}} {    + \frac{1}{|\Bw|^{n+ps+p}} } d\Bw\\ \nonumber
&\leq \frac{C}{a^{ps}}.
\end{align*}
This completes the proof of the lemma.

\end{proof}

 \end{document}